\documentclass[a4paper]{article}

\usepackage{a4wide}
\usepackage{latexsym}
\usepackage[UKenglish]{babel}
\usepackage{graphicx}
\usepackage{subcaption}
\usepackage{algorithm}
\usepackage{algpseudocode}
\usepackage{epstopdf}
\usepackage[colorlinks=true,linkcolor=blue,citecolor=blue]{hyperref}
\usepackage{xcolor}
\usepackage{booktabs}
\usepackage{amsfonts,amsmath,amsthm,mathtools,bbm,cool}

\newtheorem{theorem}{Theorem}[section]

\theoremstyle{remark}

\newcommand{\be}{\begin{equation}}
\newcommand{\ee}{\end{equation}}

\usepackage{color}

\allowdisplaybreaks

\begin{document}
\title{Kinetic and mean-field modeling of muscular dystrophies}

\author{
		Tommaso Lorenzi\\
		{\small	Department of Mathematical Sciences ``G. L. Lagrange"} \\
		{\small Politecnico di Torino, Italy} \\
		{\small\tt tommaso.lorenzi@polito.it} 
		\\
		Horacio Tettamanti\\
		{\small	Department of Mathematics ``F. Casorati''} \\
		{\small University of Pavia, Italy} \\
		{\small\tt horacio.tettamanti01@universitadipavia.it }
		\\
		 Mattia Zanella \\
		{\small	Department of Mathematics ``F. Casorati''} \\
		{\small University of Pavia, Italy} \\
		{\small\tt mattia.zanella@unipv.it} 
		}
		\date{}

\maketitle

\begin{abstract} We present a new class of models for assessing the cell dynamics characterising muscular dystrophies. The proposed approach comprises a system of integro-differential equations for the statistical distributions, over a large patient cohort, of the densities of muscle fibers and immune cells implicated in muscle inflammation, degeneration, and regeneration, which underpin disease development. Considering an appropriately scaled version of this model, we formally derive, as the corresponding mean-field limit, a system of Fokker-Planck equations, from which we subsequently derive, as a  macroscopic model counterpart, a system of differential equations for the mean densities of muscle and immune cells in the cohort of patients and the related variances. Then, we study long-time asymptotics for the mean-field model by determining the quasi-equilibrium cell distribution functions, which are in the form of probability density functions of inverse Gamma distributions, and proving the long-time convergence to such quasi-equilibrium distributions. The analytical results obtained are illustrated by means of a sample of results of numerical simulations. The modeling approach presented here has the potential to offer new insights into the balance between degeneration and regeneration mechanisms in the progression of muscular dystrophies, and provides a basis for future extensions, including the modeling of therapeutic interventions.
\end{abstract}

\section{Introduction}

Muscular dystrophies (MD) are commonly defined as congenital myogenic disorders charaterized by a progressive damage of the muscular tissue with variable distribution and severity, see e.g. \cite{EMQ} and references therein. Among the most common groups of dystrophies, the Duchenne and Becker MDs are the most common and serious forms of this disease. In Duchenne muscular dystrophy (DMD), onset typically occurs in early childhood with generalized weakness of the limbs. This leads to a progressive loss of ambulation from early adolescence onward, accompanied by a decline in respiratory capacity and progressive cardiac involvement, ultimately resulting in death \cite{Emery}. 

At the cellular level, in healthy muscle regeneration, macrophages clear cellular debris, resolve inflammation, and secrete factors that promote tissue repair. In dystrophic conditions, such as DMD, repeated muscle fiber damage triggers the infiltration of immune cells, including cytotoxic T lymphocytes, neutrophils, and macrophages \cite{TVMTF}. These cells release pro-inflammatory mediators that disrupt the normal regenerative process, promote chronic inflammation, and exacerbate muscle degeneration, fat replacement, reactive myofibrosis, and chronic inflammation \cite{DSO}. Over time, persistent immune activation contributes to fibrosis and can extend to cardiac muscle, leading to cardiomyopathy and cardiac dysfunction.

In fact, while mechanical injury and membrane defects are recognized as key contributors to the pathology of dystrophic diseases, they do not fully account for the onset and progression of MDs. Increasing evidence points to inflammation-driven mechanisms as early triggers of lesion formation in dystrophin-deficient muscle -- see e.g. \cite{TVMTF} and references therein. The immune response is believed to play a significant role in driving muscle damage. Supporting this view, histological analyses of dystrophic muscle frequently reveal extensive infiltration by cytotoxic T lymphocytes, macrophages, and other immune cells. In this respect, the pathology progresses through a cascade of events that can be broadly outlined as follows: macrophages are the first to infiltrate the damaged muscle tissue, where they begin clearing cellular debris. During this process, they attract cytotoxic T lymphocytes, which begin attacking not only compromised cells but also healthy muscle cells, thereby exacerbating the damage.

Only few mathematical models are currently available to describe cell dynamics in MD progression and to capture the interplay between inflammation, degeneration, and regeneration -- see \cite{HCG} for a review on this topic. Among them we mention the compartmental models formulated as ordinary differential equations employed in \cite{DAC,HG,JCEGL}. Developing mathematical models capable of integrating the roles of different immune cell populations and their mediators among large cohorts of MD patients remains an open challenge. Addressing this challenge could provide insight into disease progression and could support the design of targeted therapeutic strategies. 

An increasing number of real world phenomena have been fruitfully described by kinetic and mean-field models. Particular attention has been paid in the past decade to self-organizing systems in life sciences. Without intending to review the vast literature on the subject, we refer the reader to \cite{Burger,Klar,Machado,Med,PT,Perthame} ad the references therein for an introduction. Within this framework, in the present paper we develop a kinetic model for the dynamics of the statistical distributions of the  densities of muscle cells and immune cells in a large cohort of MD patients. Building upon~\cite{DAC}, we focus on a simplified scenario wherein normal muscle cell degeneration is driven by cytotoxic T lymphocytes, the recruitment and activity of which are mediated by macrophages. Next, considering an appropriately scaled version of this kinetic model, we formally derive, as the corresponding mean-field limit, a system of Fokker-Planck equations, from which we subsequently derive, as a  macroscopic model counterpart, a system of differential equations for the mean densities of muscle and immune cells in the cohort of patients and the related variances. Then, building on the approach introduced in \cite{TZ}, and later extended in \cite{BMTZ,MTZ}, we study long-time asymptotics for the mean-field model by determining the quasi-equilibrium cell distribution functions, which are in the form of probability density functions of inverse Gamma distributions, and proving the long-time convergence to such quasi-equilibrium distributions, in an appropriate Sobolev norm. Finally, the analytical results obtained are illustrated by means of a sample of results of numerical simulations. 

In contrast to compartmental models formulated as ordinary differential equations, the kinetic and mean-field models presented here makes it possible to capture emergent macroscopic dynamics resulting from microscopic processes taking place at the cellular level. Moreover, beyond theoretical studies, the proposed modeling approach offers a natural setting to integrate medical information retrieved from heterogeneous cohorts of MD patients, for example extracted from magnetic resonance imaging data, see e.g. \cite{Agosti,Cabini, CTZ}. In fact, by aggregating data from across a cohort of patients, one may reconstruct statistical descriptors of the underlying cell dynamics, which can in turn be used to forecast disease progression at the patient cohort level. In this respect, the kinetic model presented here and its mean-field limit represent the first modeling approach offering the potential of combining a mathematical model of cell dynamics with heterogeneous patient data, so as to provide predictive insights into MD progression.

The remainder of the paper is organized as follows. In Section \ref{sect:2}, we present the kinetic model, along with the underlying microscopic rules modeling cell dynamics, and we then derive the mean-field and macroscopic counterparts of this model. In Section \ref{sect:3}, we study the long-time behaviour of the mean-field model. In Section~\ref{sect:4}, we present a sample of results of numerical simulations through
which we illustrate the theoretical results obtained in the previous sections. In Section~\ref{sect:5}, we conclude with a summary of key findings and a brief discussion of future directions for the modeling approach proposed here.

\section{Kinetic and mean-field models}
\label{sect:2}
Building upon~\cite{DAC}, for simplicity we group the muscle fibers into normal and damaged cells and, among immune cells, we take into account the action of macrophages and cytotoxic T lymphocytes only. We thus consider four populations of cells, which are key players in the dystrophic progression: normal cells, $N$, damaged cells, $D$, macrophages, $M$, and cytotoxic T lymphocytes, $C$. Note that, to distil the essence of the problem, compared to~\cite{DAC} we do not model explicitly the dynamics of helper T cells and regenerating muscle cells.

For a given large cohort of patients under consideration, we denote by $f_J(x,t)$ the statistical distribution of the (number) density of cells of type $J \in \{N,D,M,C\}$ over the patients -- i.e. $f_J(x,t) \, dx$ is the fraction of cohort patients whose density of cells of type $J$ falls within the interval $[x,x+dx) \subset \mathbb{R}_+$ at time $t\ge0$. Naturally, the distribution function $f_J$ is such that 
\begin{equation}
\label{eq:unitmass}
\int_{\mathbb R_+}f_J(x,t)dx = 1 \quad \forall t \geq 0, \quad J \in \{N,D,M,C\}.
\end{equation}
We then define the moment of order $k\ge0$ of the distribution function $f_J$ as
\[
m_J^{(k)}(t) = \int_{\mathbb R_+}x^k f_J(x,t)dx,
\]
and, for ease of notation, we denote the moment of order $k=1$, i.e. the mean, as
\begin{equation}
\label{eq:mJ}
m_J(t) = m_J^{(1)}(t) = \int_{\mathbb R_+}x^k f_J(x,t)dx, 
\end{equation}
and the variance as
\begin{equation}
\label{eq:VJ}
V_J(t) = m_J^{(2)}-(m_J)^2 = \int_{\mathbb R_+}(x-m_J)^2f_J(x,t)dx.
\end{equation}
We refer the interested reader to \cite{DPTZ} for a related modeling approach employed in a different biological context. 

\subsection{Microscopic rules underlying the kinetic model}
\label{sec:microrules}
We now summarise the microscopic rules which underpin the system of integro-differential equations governing the evolution of the distribution functions $f_J(x,t)$, $J \in \{N,D,M,C\}$, that is, the kinetic model~\eqref{eq:system} presented in Section~\ref{sec:kineticmodel}.

\paragraph{Microscopic rules underlying the dynamics of normal and damaged cells.} Building upon~\cite{DAC}, we assume normal cells to become damaged as a result of interactions with cytotoxic T lymphocytes, while interactions between damaged cells and macrophages result in damaged cells being cleared out and normal cells being replenished through muscle regeneration. Therefore, denoting the values of the densities of normal and damaged cells by $x_N \in \mathbb R_+$ and $x_D \in \mathbb R_+$, respectively, and the values of the densities of macrophages and cytotoxic T lymphocytes by $x_M \in \mathbb R_+$ and $x_C \in \mathbb R_+$, respectively, we define the value of the density of normal cells upon interactions between normal cells and cytotoxic T lymphocytes, $x_N^\prime \in \mathbb R_+$, and the value of the density of damaged cells upon interactions between damaged cells and macrophages, $x^\prime_D \in \mathbb R_+$, as
\begin{equation}
\label{eq:xpyp}
\begin{split}
x_N^\prime &= x_N  -  \beta_N\Phi_N(x_C) x_N + \eta_N(x_C) x_N, \\ 
x_D^\prime &= x_D - \beta_D \Phi_D(x_M) x_D + \eta_D(x_M) x_D.
\end{split}
\end{equation}
In the definitions~\eqref{eq:xpyp}, the terms $\beta_N\Phi_N(x_C) x_N$ and $\eta_N(x_C) x_N$ model, respectively, deterministic and random variations in the value of the density of normal cells due to interactions with cytotoxic T lymphocytes. Similarly, the terms $\beta_D \Phi_D(x_M) x_D$ and $\eta_D(x_M) x_D$ model deterministic and random variations in the value of the density of damaged cells due to interactions with macrophages. Here $\Phi_N(x_C)$ and $\Phi_D(x_M)$ are non-negative saturating functions such that $\Phi_N(0)=\Phi_D(0)=0$. To retain simplicity and maintain generality, throughout the paper we set
\begin{equation}
\label{eq:PhiNPhiD}
\Phi_N(x) = \Phi_D(x) = \dfrac{x}{1+x}, \quad x \in \mathbb{R}_+. 
\end{equation}
The parameter $\beta_N>0$ thus provides a possible measure of the maximum relative change in normal cell density due to interactions with T lymphocytes, while the parameter $\beta_D>0$ provides a possible measure of the maximum relative change in damaged cell density due to interactions with macrophages. Moreover, $\eta_N$ and $\eta_C$ are independent random variables such that, denoting the expectation of a random variable with respect to the corresponding probability density function by $\left\langle \cdot \right\rangle$, 
\[
\left\langle \eta_N \right\rangle = \left\langle \eta_D \right\rangle = 0, \qquad \left\langle \eta_N^2 \right\rangle = \sigma_N^2 \Phi_N(x_C), \qquad \left\langle \eta_D^2 \right\rangle = \sigma_D^2 \Phi_D(x_M).
\]
Hence, the parameters $\sigma_N$ and $\sigma_D$ are linked to the standard deviations of the zero-mean random variables that model fluctuations in the densities of normal and damaged cells.

Coherently with the microscopic rules defined via~\eqref{eq:xpyp}, we define the value of the density of normal cells upon interactions between damaged cells and macrophages, $x_N^{\prime\prime} \in \mathbb R_+$, and the value of the density of damaged cells upon interactions between normal cells and cytotoxic T lymphocytes, $x_D^{\prime\prime} \in \mathbb R_+$, as
\begin{equation}
\label{eq:xppypp}
\begin{split}
x_N^{\prime\prime} &= x_N + \beta_D \Phi_D(x_M) x_D, \\
x_D^{\prime\prime} &= x_D + \beta_N\Phi_N(x_C) x_N.
\end{split}
\end{equation}

Note that the definitions~\eqref{eq:xpyp} and~\eqref{eq:xppypp} complemented with the definitions~\eqref{eq:PhiNPhiD} are such that if $x_N, x_D, x_M, x_C\ge0$ then both  $x_N^\prime, x_D^\prime\ge0$ and $x_N^{\prime\prime}, x_D^{\prime\prime}\ge0$ for any $\beta_N, \beta_D>0$. 

\paragraph{Microscopic rules underlying the dynamics of cytotoxic T lymphocytes and macrophages.} We let the densities of macrophages and cytotoxic T lymphocytes, $x_M \in \mathbb R_+$ and $x_C \in \mathbb R_+$, undergo natural decay at constant rates $\beta_M>0$ and $\beta_C > 0$, respectively, along with small random fluctuations due to homeostatic regulation mechanisms. Moreover, building again upon~\cite{DAC}, we let the density of macrophages increase at a rate proportional to the density of damaged cells, $x_D \in \mathbb R_+$, with a constant of proportionality $\gamma_M>0$, while the density of cytotoxic T lymphocytes increases at a rate proportional to the the density of macrophages with a constant of proportionality $\gamma_C>0$. Hence, we define the value of the density of macrophages (resp. cytotoxic T lymphocytes) upon natural decay with small random fluctuations, $x_M^\prime \in \mathbb R_+$ (resp. $x_C^\prime \in \mathbb R_+$), and the value of the density of macrophages (resp. cytotoxic T lymphocytes) upon growth caused by the presence of damaged cells (resp. macrophages),  $x_M^{\prime\prime}$ (resp. $x_C^{\prime\prime}$), as
\begin{equation}
\label{eq:m}
\begin{split}
x_M^\prime &= x_M - \beta_M x_M + \eta_M x_M,  \\
x_M^{\prime\prime} &= x_M + \gamma_M x_D,
\end{split}
\end{equation}
and 
\begin{equation}
\label{eq:c}
\begin{split}
x_C^\prime &= x_C - \beta_C x_C + \eta_C x_C, \\
x_C^{\prime\prime} &= x_C + \gamma_C x_M.
\end{split}
\end{equation}
Here $\eta_M$ and $\eta_C$ are independent random variables such that
\[
\left\langle \eta_M \right\rangle = \left\langle \eta_C \right\rangle = 0, \qquad \left\langle \eta_M^2 \right\rangle = \sigma_M^2, \qquad \left\langle \eta_C^2 \right\rangle = \sigma_C^2.
\]
Therefore, the parameters $\sigma_M$ and $\sigma_C$ are the standard deviations of the zero-mean random variables that model fluctuations in the densities of macrophages and cytotoxic T lymphocytes.

Note again that the definitions~\eqref{eq:m} and~\eqref{eq:c} are such that if $x_N, x_D, x_M, x_C\ge0$ then both $x_M^\prime, x_C^\prime\ge0$ and $x_M^{\prime\prime}, x_C^{\prime\prime}\ge0$ for any $\beta_M, \beta_C>0$. 

\subsection{The kinetic model}
\label{sec:kineticmodel}
The evolution of the distribution functions $f_J(x,t)$, $J \in \{N,D,C,M\}$, is governed by the kinetic model defined by the following system of integro-differential equations
\begin{equation}
\label{eq:system}
\begin{split}
\dfrac{\partial}{\partial t}f_N &= \mathcal Q_{N}(f_N,f_C) + \mathcal F^{\beta_D}(f_N,f_D,f_M),\\
\dfrac{\partial}{\partial t}f_D &= \mathcal Q_{D}(f_D,f_M) + \mathcal F^{\beta_N}(f_D,f_N,f_C), \\
\dfrac{\partial}{\partial t}f_M &= \mathcal L(f_M) + \mathcal R^{\gamma_M}(f_M,f_D), \\
\dfrac{\partial}{\partial t}f_C &= \mathcal L(f_C) + \mathcal R^{\gamma_C}(f_C,f_M),
 \end{split}
 \qquad (x,t) \in \mathbb{R}^2_+,
\end{equation}
where the microscopic rules formulated in Section~\ref{sec:microrules} are mirrored into the definitions of the operators $\mathcal Q$, $\mathcal F$, $\mathcal L$, and $\mathcal R$, as it is common in kinetic theory -- see e.g. \cite{PT} and references therein. 

In the system~\eqref{eq:system}: 
\begin{itemize}
\item the operator $\mathcal Q_H(f_H,f_K)$ is defined as
\[
\begin{split}
\mathcal Q_H(f_H,f_K)(x,t) = \left\langle \int_{\mathbb R_+}\kappa(x_*) \left( \dfrac{1}{{}^\prime J_{HK}}f_H({}^\prime x,t)f_K({}^\prime x_*,t) - f_H(x,t)f_K(x_*,t) \right)dx_* \right\rangle, \\
\end{split}
\]
where $\kappa$ is an interaction kernel that, along the lines of~\cite{BMTZ}, is defined as 
\begin{equation}
\label{eq:defsmallk}
\kappa(x) = 1 + x, \quad x \in \mathbb{R}_+, 
\end{equation}
while ${}^\prime J_{HK}$ is the determinant of the Jacobian associated with the transformation ${}^\prime x\to x$ defined via the first line of~\eqref{eq:xpyp} for $(H,K) = (N,C)$ and the second line of~\eqref{eq:xpyp} for $(H,K) = (D,M)$;

\item the operator $\mathcal F^{\beta_K}(f_H,f_K,f_L)$ is defined as
\[\begin{split}
\mathcal F^{\beta_K}(f_H,f_K,f_L)(x,t) = &  \int_{\mathbb R^2_+} \kappa(x^*) \Big(\dfrac{1}{{}^{\prime\prime} J_{HKL}}f_H({}^{\prime\prime} x,t)f_K( x_*,t ) f_L( x^*,t ) 
\\ &- f_H(x,t) f_K(x_*,t) f_L(x^*,t)\Big) dx_* dx^*, 
\end{split}\]
where ${}^{\prime\prime} J_{HKL}$ is the determinant of the Jacobian associated with the transformation ${}^{\prime\prime} x\to x$ defined via the first line of~\eqref{eq:xppypp} for $(H,K,L) = (N,D,M)$ and the second line of~\eqref{eq:xppypp} for $(H,K,L) = (D,N,C)$; 
\item the operator $\mathcal L(f_H)$ is defined as
\[
\mathcal L(f_H)(x,t)  = \left\langle  \dfrac{1}{{}^\prime J_H} f_H({}^\prime x,t) \right\rangle - f_H(x,t), 
\]
where ${}^{\prime} J_{H}$ is the Jacobian associated with the transformation ${}^\prime x\to x$ defined via the first line of~\eqref{eq:m} for $H = M$ and the first line of~\eqref{eq:c} for $H = C$;
\item the operator $\mathcal R^{\gamma_H}(f_H,f_K)$ is defined as
\[
\mathcal R^{\gamma_H}(f_H,f_K)(x,t) = \int_{\mathbb R_+} \left( \dfrac{1}{{}^{\prime\prime} J_{HK}}f_H({}^{\prime\prime} x,t)f_K({}^{\prime\prime} x_*,t ) - f_H(x,t) f_K(x_*,t)\right) dx_*, 
\]
where ${}^{\prime\prime} J_{HK}$ is the determinant of the Jacobian associated with the transformation ${}^{\prime\prime} x\to x$ defined via the second line of~\eqref{eq:m} for $(H,K) = (M,D)$ and the second line of~\eqref{eq:c} for $(H,K) = (C,M)$.
\end{itemize} 

Note that, as expected, when subject to initial conditions with non-negative components of unit mass, i.e. initial conditions such that
\begin{equation}
\label{eq:unitmassIC}
\int_{\mathbb R_+}f_J(x,0)dx = 1, \quad J \in \{N,D,M,C\},
\end{equation}
the components of the solution to the kinetic model~\eqref{eq:system} are also non-negative and with unit mass for all $t>0$, that is, conditions~\eqref{eq:unitmass} hold.

\subsubsection{Evolution of key observable macroscopic quantities for the kinetic model}
\label{sec:macrokin}
To investigate the dynamics of the cell populations in the cohort of patients under consideration at the level of observable macroscopic quantities, such as the mean, $m_J(t)$, defined via~\eqref{eq:mJ} (i.e. the expected value of the density of cells in population $J$ at time $t$) and the variance, $V_J(t)$, defined via~\eqref{eq:VJ}, it is convenient to rewrite the kinetic model~\eqref{eq:system} in the following weak form
\begin{equation}
\label{eq:kinetic}
\begin{split}
\dfrac{d}{dt} \int_{\mathbb R_+} \varphi_N(x)f_N(x,t)dx = &\int_{\mathbb R_+^2} \kappa(c)\left\langle \varphi_N(x^\prime)-\varphi_N(x) \right\rangle f_N(x,t) f_C(c,t) dc\,dx \, + \\
& \int_{\mathbb R_+^3} \kappa(z) (\varphi_N(x^{\prime\prime})-\varphi_N(x))f_N(x,t)f_M(z,t)f_D(y,t)dy\,dz\,dx, \\
\dfrac{d}{dt}  \int_{\mathbb R_+} \varphi_D(x)f_D(x,t)dx = &\int_{\mathbb R_+^2} \kappa(z)\left\langle \varphi_D(x^\prime)-\varphi_D(x) \right\rangle f_D(x,t) f_M(z,t) dz\,dx \, + \\
& \int_{\mathbb R_+^3} \kappa(c)(\varphi_D(x^{\prime\prime})-\varphi_D(x))f_D(x,t)f_C(c,t)f_N(n,t)dn\,dc\,dx, \\
\dfrac{d}{dt} \int_{\mathbb R_+} \varphi_M(x)f_M(x,t)dx = &\int_{\mathbb R_+} \left\langle \varphi_M(x^\prime)-\varphi_M(x) \right\rangle f_M(x,t) dx \, + \\
& \int_{\mathbb R_+^2} (\varphi_M(x^{\prime\prime})-\varphi_M(x))f_D(y,t) f_M(x,t)dy\,dx, \\
\dfrac{d}{dt} \int_{\mathbb R_+} \varphi_C(x)f_C(x,t)dx = &\int_{\mathbb R_+} \left\langle \varphi_C(x^\prime)-\varphi_C(x) \right\rangle f_C(x,t) dx \, + \\
& \int_{\mathbb R_+^2} (\varphi_C(x^{\prime\prime})-\varphi_C(x))f_C(x,t) f_M(z,t)dz \,dx,
\end{split}
\qquad
\end{equation}
where $\varphi_J(x)$, $J \in\{N,D,M,C\}$, are smooth test functions and the interaction kernel $\kappa$ is defined via~\eqref{eq:defsmallk}.

\paragraph{Evolution equations for $m_J(t)$, $J \in \{N,D,M,C\}$.} Choosing $\varphi_J(x) = x$ in the weak form~\eqref{eq:kinetic} of the kinetic model~\eqref{eq:system}, we obtain the following system of differential equations for the means  
\begin{equation}
\label{eq:mean_K_NDMC}
\begin{split}
\dfrac{d}{dt}m_N(t) =& -\beta_N m_N(t) m_C(t)  + \beta_D m_M(t) m_D(t), \\
\dfrac{d}{dt}m_D(t) =& -\beta_D m_D(t) m_M(t)  + \beta_N m_N(t) m_C(t), \\
\dfrac{d}{dt}m_M(t) =& -\beta_M m_M(t) + \gamma_M m_D(t), \\
\dfrac{d}{dt}m_C(t) =& -\beta_C m_C(t)  + \gamma_C m_M(t).
\end{split}
\end{equation}
Note that the system~\eqref{eq:mean_K_NDMC} is consistent with the compartmental model considered in~\cite{DAC} within the framework of the simplifying assumptions that underlie the kinetic model considered here. 

From the system~\eqref{eq:mean_K_NDMC} we see that when the kinetic model~\eqref{eq:system} is subject to non-negative initial conditions that, in addition to~\eqref{eq:unitmassIC}, satisfy  
\begin{equation}
\label{eq:boundedmeanIC}
0 < m_J(0) = \int_{\mathbb R_+}x f_J(x,0)dx < \infty, \quad J \in \{N,D,M,C\},
\end{equation}
then
\begin{equation}
\label{eq:boundedmean}
0 < m_J(t) = \int_{\mathbb R_+}x f_J(x,0)dx < \infty \quad \forall t > 0, \quad J \in \{N,D,M,C\}.
\end{equation}
From the system~\eqref{eq:mean_K_NDMC} we also see that 
$$
m_N(t) + m_D(t) = m_N(0) + m_D(0) \quad \forall t > 0,
$$
and, therefore, under an initial condition that satisfies the additional condition
\begin{equation}
\label{eq:ICNpmD}
m_N(0) + m_D(0) = m^0 > 0
\end{equation}
we have
\begin{equation}
\label{eq:NpmD}
m_N(t) + m_D(t) = m^0 \quad \forall t \geq 0.
\end{equation}
Finally, under assumptions~\eqref{eq:boundedmeanIC} and~\eqref{eq:ICNpmD} on the initial condition, as $t \to \infty$ the solution of the system~\eqref{eq:mean_K_NDMC} converges to the equilibrium of components 
\begin{equation}
\label{eq:MNDMCinfty}
m_N^\infty = \dfrac{\beta_D\beta_C m^0}{\beta_D \beta_C + \beta_N\gamma_C}, \quad m_D^\infty = \dfrac{\beta_N \gamma_C m^0}{\beta_D \beta_C + \beta_N \gamma_C}, \quad m_M^\infty = \dfrac{\gamma_M}{\beta_M} m_D^\infty, \quad m_C^\infty = \dfrac{\gamma_C\gamma_M}{\beta_C\beta_M} m_D^\infty. 
\end{equation}
This is also shown by the numerical solutions displayed in the top, left panel and the center panels of Figure 1.

\paragraph{Evolution equations for $V_J(t)$, $J \in \{N,D,M,C\}$.} Choosing $\varphi_J(x) = (x-m_J)^2$ in the weak form~\eqref{eq:kinetic} of the system~\eqref{eq:system}, we find the following system of differential equations for the variances  
\begin{equation}
\label{eq:var_K_NDMC}
\begin{split}
\dfrac{d}{dt} V_N(t) =& \beta_N^2\int_{\mathbb R_+^2 } \dfrac{n^2 c^2}{1+c}f_N(x,t)f_C(c,t)dc\,dx +  \beta_D^2 \left[V_M(t) + m_M^2(t)\right] \left[ V_D(t) + m_D^2(t)\right] + \\
& - (2\beta_N - \sigma^2_N) m_C(t) \left[V_N(t)  - \dfrac{\sigma_N^2m_N^2(t)}{2\beta_N - \sigma^2_N}\right], \\
\dfrac{d}{dt} V_D(t) =&\beta_D^2 \int_{\mathbb R_+^2}\dfrac{z^2 x^2}{1+z}f_D(x,t)f_M(z,t)dz\,dx + \beta_N^2 \left[V_N(t) + m_N^2(t)\right]   \left[V_C(t) + m_C^2(t)\right] + \\
& - (2\beta_D - \sigma_D^2) m_M(t)\left[V_D(t) - \dfrac{\sigma_D^2m_D^2(t)}{2\beta_D - \sigma^2_D} \right], \\
\dfrac{d}{dt} V_M(t) =&\beta_M^2 \left[V_M(t) + m_M^2\right] - (2\beta_M-\sigma_M^2) \left[V_M(t) - \dfrac{\sigma_M^2(t)m_M^2(t)}{2\beta_M-\sigma_M^2}\right]  + \gamma_M^2 [V_D(t) + m_D^2(t)],\\
\dfrac{d}{dt}V_C(t) =& \beta_C^2 [V_C(t) + m_C^2(t)] - (2\beta_C - \sigma_C^2) \left[V_C(t) - \dfrac{\sigma_C^2 m_C^2(t)}{2\beta_C - \sigma_C^2}\right] + \gamma_C^2 [V_M(t) + m_M^2(t)]. 
\end{split}
\end{equation}
Note that, in contrast to the system~\eqref{eq:mean_K_NDMC}, the system~\eqref{eq:var_K_NDMC} is not closed as it still requires the knowledge of the distribution functions $f_J(x,t)$, $J \in \{N,D,M,C\}$. This hinders obtaining a self-consistent representation of the dynamics of the cell populations in the patient cohort in terms of observable macroscopic quantities. To circumvent this difficulty, in the next section we will derive a mean-field counterpart of the kinetic model~\eqref{eq:system} -- cf. the mean-field model~\eqref{eq:MFN}-\eqref{eq:MFoBCs} -- for which the corresponding system of differential equations for the means is the same as~\eqref{eq:mean_K_NDMC}, while the system of differential equations for the variances is closed. In addition, for the mean-field model one can also obtain a detailed characterisation of the long-term limits of the distribution functions, and not only of the corresponding means and variances, as demonstrated in Section~\ref{sect:3}, which supports a more detailed statistical representation of the cell populations in the patient cohort.

\subsection{Corresponding mean-field model} \label{sect:mean-field}
Building upon the method employed, for instance, in~\cite{BMTZ,DPTZ,FPTT}, we introduce a small scaling parameter $0 < \epsilon \ll 1$ and scale the time variable as 
\begin{equation}
\label{eq:timescalingeps}
\begin{split}
t \to t/\epsilon
\end{split}
\end{equation}
and the model parameters as 
\begin{equation}
\label{eq:paramscalingeps}
\begin{split}
&\beta_J  \to \epsilon \beta_J \;\; \text{and} \;\; \sigma_J^2 \to \epsilon \sigma_J^2 \;\; \text{for } J \in \{N,D,M,C\}, \qquad \gamma_C\to \epsilon \gamma_C, \qquad \gamma_M \to \epsilon \gamma_M.
\end{split}
\end{equation}
Under this parameter scaling, the dynamics governed by the microscopic rules defined via~\eqref{eq:xpyp}, \eqref{eq:xppypp}, \eqref{eq:m}, and~\eqref{eq:c} become quasi-invariant, which allows us to resort to the following Taylor expansions
\[
\begin{split}
\varphi_J(x^\prime)- \varphi_J(x) &\approx (x^\prime-x)\varphi'_J(x) + \dfrac{1}{2}(x^\prime-x)^2 \varphi_J^{\prime\prime}(x) + \dfrac{1}{6}(x^\prime-x)^3 \varphi^{\prime\prime\prime}_J(\bar x), \\
\varphi_J(x^{\prime\prime})- \varphi_J(x) &\approx \varphi_J^\prime(x)  + \dfrac{1}{2}\varphi_J^{\prime\prime}(\tilde x)(x^{\prime\prime}-x)^2,
\end{split}\]
for $J \in \{N,D,M,C\}$, where 
\[
\bar x \in (\min\{ x,x^\prime\}, \max\{x,x^\prime\}), \qquad  \tilde x \in (\min\{ x,x^{\prime\prime}\}, \max\{x,x^{\prime\prime}\}). 
\]
Then, introducing the scaled distribution functions 
$$
f_J^\epsilon(x,t) = f_J(x,t/\epsilon), \quad J \in \{N,D,M,C\},
$$
substituting the time scaling~\eqref{eq:timescalingeps} along with the above Taylor expansions into the weak form~\eqref{eq:kinetic} of the integro-differential equation for the distribution function $f_N(x,t)$, we formally obtain the following  weak form of the integro-differential equation for the scaled distribution function $f_N^\epsilon(x,t)$ 
\begin{equation}
\label{eq:wfidef_Neps}
\begin{split}
\dfrac{d}{dt}  \int_{\mathbb R_+}\varphi_N(x)f_N^\epsilon(x,t)dx =&   - \beta_N\int_{\mathbb R_+^2}x\,c\,\varphi_N^\prime(x) f_N^\epsilon(x,t)f_C^\epsilon(c,t)dc\,dx + \\
&\dfrac{\sigma_N^2}{2} \int_{\mathbb R_+^2}x^2\,c\,\varphi_N^{\prime\prime}(x) f_N^\epsilon(x,t)f_C^\epsilon(c,t)dc\,dx + \\
&   \beta_D\int_{\mathbb R_+} z\,y\, \varphi_N^\prime(x)f_N^\epsilon(x,t) f_D^\epsilon(y,t)f^\epsilon_M(z,t)dz\,dy\,dx + \dfrac{R^\epsilon_N}{\epsilon},
\end{split}
\end{equation}
where
\[
\begin{split}
R^\epsilon_N =& \dfrac{1}{6} \int_{\mathbb R_+^2}\kappa(c) \left\langle \left(- \epsilon \beta_N \dfrac{xc}{1+c} + x\eta_N(c)\right)^3 \right\rangle\varphi_N^{\prime\prime\prime}(x) f_N^\epsilon(x,t)f_C^\epsilon(c,t)dc\,dx \\
& + \dfrac{\epsilon^2\beta_D^2}{2}\int_{\mathbb R_+}  z\,y\, \varphi^{\prime\prime}_N(x)f_N^\epsilon(x,t) f^\epsilon_M(z,t) f^\epsilon_D(y,t) dy\,dz\,dx. 
\end{split}\]
Provided that the third order moment of the probability density function of the random variable $\eta_N$ is bounded, we can rewrite $\eta_N = \sqrt{\dfrac{\epsilon \sigma_N^2c}{1+c} }\hat \eta$, with $\left\langle \hat \eta \right\rangle = 0$ and $\left\langle \hat \eta^2 \right\rangle = 1$, from which it follows that $R^\epsilon_N \approx \epsilon^3 + \epsilon^2 + \epsilon \sqrt{\epsilon}$. Moreover, under the time scaling~\eqref{eq:timescalingeps} and the parameter scaling~\eqref{eq:paramscalingeps}, the means $m_J^\epsilon(t) = m_J(x,t/\epsilon)$, $J \in \{N,D,M,C\}$, defined via~\eqref{eq:mJ} coincide with the components of the solution to the system~\eqref{eq:mean_K_NDMC} (i.e. if $m_J^\epsilon(0) = m_J(0)$ then $m_J^\epsilon(t) = m_J(t)$ for all $t>0$), which are bounded (cf. \eqref{eq:boundedmean}). Hence, we have that $|R^\epsilon_N|/\epsilon \to 0$ for $\epsilon \to 0^+$. As a result, denoting by $f_N(x,t)$ the weak limit of $f_N^\epsilon(x,t)$ as $\epsilon \to 0^+$, from~\eqref{eq:wfidef_Neps} we formally obtain, in the asymptotic regime $\epsilon \to 0^+$, the following weak formulation of the governing equation for $f_N(x,t)$ 
\[
\begin{split}
\dfrac{d}{dt} \int_{\mathbb R_+}\varphi_N(x)f_N(x,t)dx =& -\beta_N \int_{\mathbb R_+} x m_C(\tau)\varphi_N^\prime(x) f_N(x,t)dx + \dfrac{\sigma_N^2m_C(t)}{2} \int_{\mathbb R_+}x^2 \varphi_N^{\prime\prime}(x) f_N(x,t)dx \\
& + \beta_D m_D(t)m_M(t) \int_{\mathbb R_+}\varphi_N^\prime(x) f_N(x,t)dx, 
\end{split}\]
from which, integrating back by parts, we find
\[
\begin{split}
\dfrac{d}{dt}\int_{\mathbb R_+}\varphi_N(x)f_N(x,t)dx  =& \int_{\mathbb R_+} \dfrac{\partial}{\partial x} \left[(\beta_N m_C(t)x - \beta_D m_M(t)m_D(t)) f_N(x,t)\right] \varphi_N(x)dx + \\
&\dfrac{\sigma^2_N m_C(t)}{2} \int_{\mathbb R_+} \dfrac{\partial^2}{\partial x^2}(x^2 f_N(x,t)) \varphi_N(x)dx.
\end{split}\]
This is a weak formulation of the following Fokker-Planck equation
\begin{equation}
	\label{eq:MFN}
\dfrac{\partial}{\partial t} f_N(x,t) = \dfrac{\partial}{\partial x}  \left[ (\beta_N  m_C(t)x - \beta_D m_M(t)m_D(t)) f_N(x,t)\right]  + \dfrac{\sigma_N^2m_C(t)}{2} \dfrac{\partial^2}{\partial x^2}(x^2 f_N(x,t))
\end{equation}
posed on $\mathbb R_+$ and subject to the no-flux boundary condition below
\begin{equation}
	\label{eq:MFNBCs}
 \left[ (\beta_N  m_C(t)x - \beta_D m_M(t)m_D(t)) f_N(x,t)\right]  + \dfrac{\sigma_N^2m_C(t)}{2} \dfrac{\partial}{\partial x}(x^2 f_N(x,t))\Big|_{x= 0} = 0. 
\end{equation}
Proceeding in a similar way, we formally obtain the following Fokker-Planck equations for the weak limits, $f_J(x,t)$, of the the scaled distribution functions, $f_J^\epsilon(x,t)$, for $J \in \{D,M,C\}$
\begin{equation}
\label{eq:MFo}
\begin{split}
\dfrac{\partial}{\partial t} f_D(x,t) &=\dfrac{\partial}{\partial x} \left[ (\beta_D m_M(t) x - \beta_N m_N(t) m_C(t)) f_D(x,t)\right] + \dfrac{\sigma_D^2}{2}m_M(t)\dfrac{\partial^2}{\partial x^2} (x^2 f_D(x,t)), \\
\dfrac{\partial}{\partial t} f_M(x,t) &= \dfrac{\partial}{\partial x} \left[ (\beta_M x - \gamma_M m_D(t))f_M(x,t)\right] +\dfrac{\sigma_M^2}{2}\dfrac{\partial^2}{\partial x^2} (x^2 f_M(x,t)), \\
\dfrac{\partial}{\partial t} f_C(x,t) &=\dfrac{\partial}{\partial x}\left[ (\beta_C x - \gamma_C m_M(t))f_C(x,t)\right] + \dfrac{\sigma_C^2}{2}\dfrac{\partial^2}{\partial x^2} (x^2 f_C(x,t)),
\end{split}
\end{equation}
which are posed on $\mathbb R_+$ and are subject, respectively, to the following no-flux boundary conditions
\begin{equation}
\label{eq:MFoBCs}
\begin{split}
\left[ (\beta_D m_M(t) x - \beta_N m_N(t) m_C(t)) f_D\right] + \dfrac{\sigma_D^2}{2}m_M(t)\dfrac{\partial}{\partial x} (x^2 f_D) \Big|_{x= 0} = 0, \\
 \left[ (\beta_M x - \gamma_M m_D(t))f_M(x,t)\right] +\dfrac{\sigma_M^2}{2}\dfrac{\partial}{\partial x} (x^2 f_M)\Big|_{x= 0} = 0, \\
\left[ (\beta_C x - \gamma_C m_M(t))f_C(x,t)\right] + \dfrac{\sigma_C^2}{2}\dfrac{\partial}{\partial x} (x^2 f_C)\Big|_{x= 0} = 0.
\end{split}
\end{equation}
The system~\eqref{eq:MFN}, \eqref{eq:MFo}, complemented with the definitions~\eqref{eq:mJ}, posed on $\mathbb R_+$ and subject to the boundary conditions~\eqref{eq:MFNBCs}, \eqref{eq:MFoBCs} is a mean-field limit of the kinetic model~\eqref{eq:system}.

Note that, as it is for the kinetic model~\eqref{eq:system}, when subject to initial conditions with non-negative components of unit mass, the components of the solution to the mean-field model~\eqref{eq:MFN}-\eqref{eq:MFoBCs} are also non-negative with unit mass for all $t>0$ -- i.e. conditions~\eqref{eq:unitmass} are met.

\subsubsection{Evolution of key observable macroscopic quantities for the mean-field model}
\label{sec:macromf}
Through direct calculations, one can easily verify that: as expected, the evolution of the means, $m_J(t)$, defined via~\eqref{eq:mJ}, for the mean-field model~\eqref{eq:MFN}-\eqref{eq:MFoBCs} is governed by the system~\eqref{eq:mean_K_NDMC} as it is for the kinetic model~\eqref{eq:system}; on the other hand, the evolution of the variances, $V_J(t)$, defined via~\eqref{eq:VJ} is governed by the following system which, in contrast to the system~\eqref{eq:var_K_NDMC} obtained for the kinetic model~\eqref{eq:system}, is a closed system:
\begin{equation}
\label{eq:var_MF_NDMC}
\begin{split}
\dfrac{d}{dt} V_N(t) &= -(2\beta_N - \sigma_N^2)m_C(t) \left[V_N(t) -\dfrac{ \sigma_N^2 m_N^2(t)}{2\beta_N - \sigma_N^2} \right], \\
\dfrac{d}{dt} V_D(t) &= - (2\beta_D - \sigma_D^2)m_M(t) \left[ V_D(t) - \dfrac{\sigma_D^2 m_D^2(t)}{2\beta_D-\sigma_D^2} \right], \\
\dfrac{d}{dt} V_M(t) &=  - (2\beta_M - \sigma_M^2) \left[V_M(t) - \dfrac{\sigma_M^2 m_M^2(t)}{2\beta_M - \sigma_M^2} \right],\\
\dfrac{d}{dt} V_C(t) &= -(2\beta_C - \sigma_C^2)\left[V_C(t) - \dfrac{\sigma_C^2 m_C^2(t)}{2\beta_C-\sigma_C^2} \right]. 
\end{split}
\end{equation}
When the mean-field model~\eqref{eq:MFN}-\eqref{eq:MFoBCs} is subject to non-negative initial conditions that, in addition to~\eqref{eq:unitmassIC}, \eqref{eq:boundedmeanIC}, and \eqref{eq:ICNpmD}, satisfy  
\begin{equation}
\label{eq:boundedvarIC}
0 < V_J(0) = \int_{\mathbb R_+}(x-m_J(0))^2f_J(x,0)dx < \infty, \quad J \in \{N,D,M,C\},
\end{equation}
under the following additional assumptions on the model parameters
\begin{equation}
\label{eq:asssigmabetaJ}
\sigma_J^2 < 2\beta_J \quad J \in \{N,D,M,C\},
\end{equation}
as $t \to \infty$ the solution of the system~\eqref{eq:var_MF_NDMC} converges to the equilibrium of components 
\begin{equation}
\label{eq:VNDMCinfty}
\begin{split}
V_N^\infty &= \dfrac{\sigma^2_N}{2\beta_N - \sigma_N^2} (m_N^\infty)^2, \qquad V_D^\infty = \dfrac{\sigma_D^2}{2\beta_D-\sigma_D^2} (m_D^\infty)^2, \\
V_M^\infty &=  \dfrac{\sigma_M^2 }{2\beta_M - \sigma_M^2} (m_M^\infty)^2, \qquad V_C^\infty = \dfrac{\sigma_C^2 }{2\beta_C-\sigma_C^2}  (m_C^\infty)^2.
\end{split}
\end{equation}
This is also shown by the numerical solutions displayed in the top, right panel and the bottom panels of Figure~\ref{fig:fig1}.

\begin{figure}[h!]
\centering
\includegraphics[scale = 0.3]{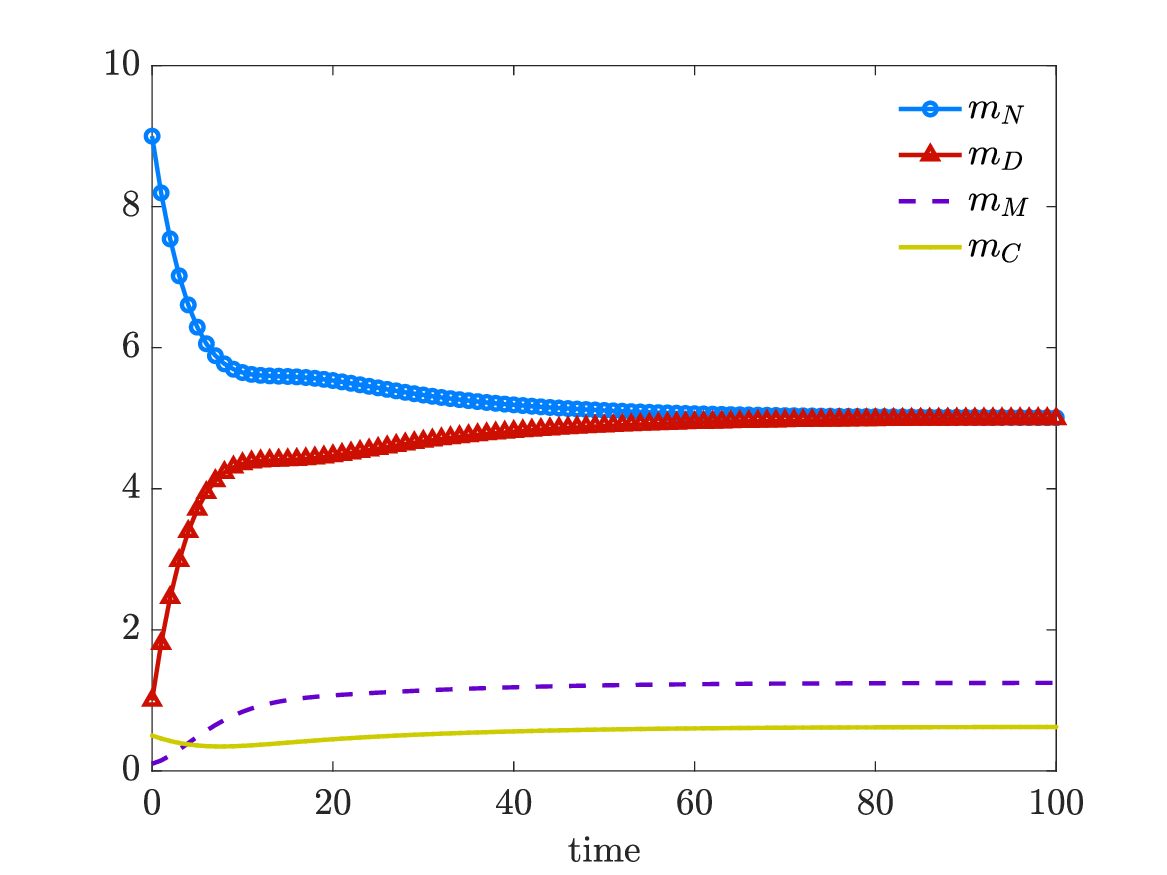}
\includegraphics[scale = 0.3]{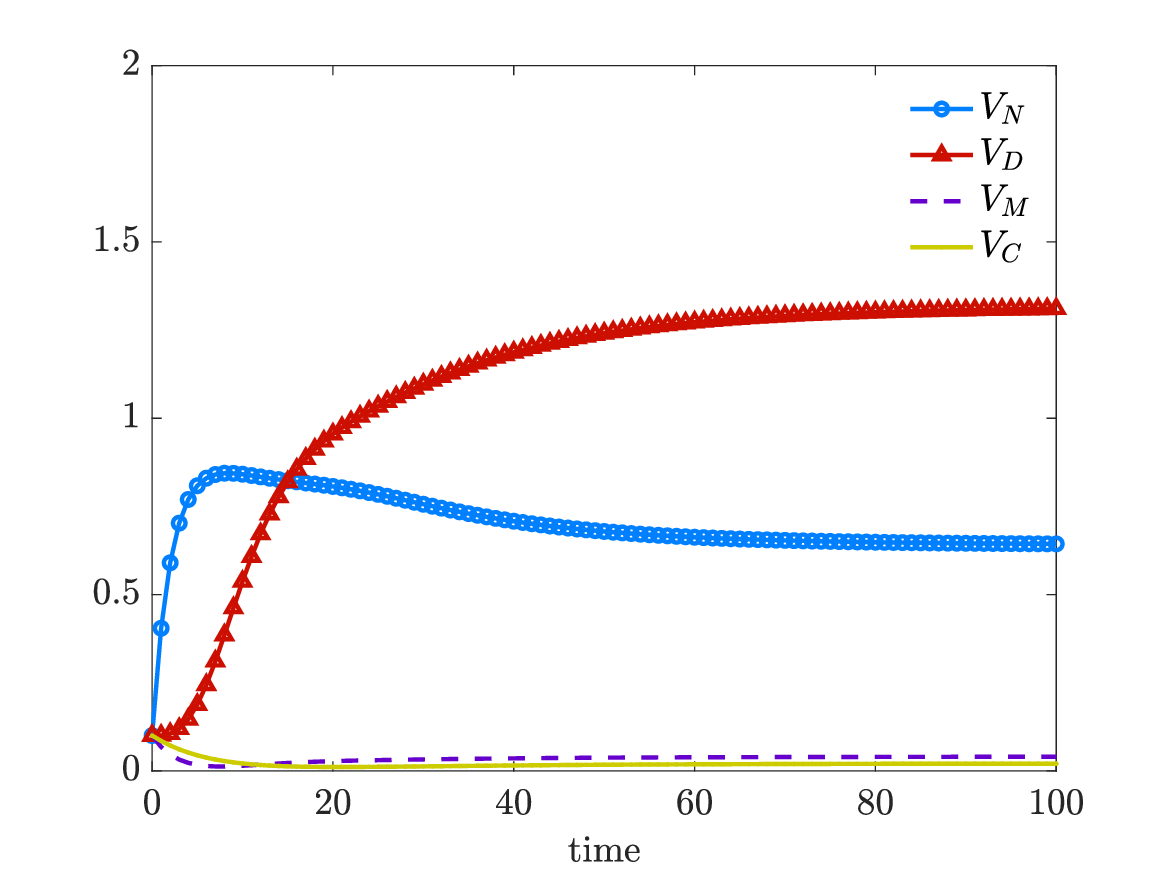}\\
\includegraphics[scale = 0.3]{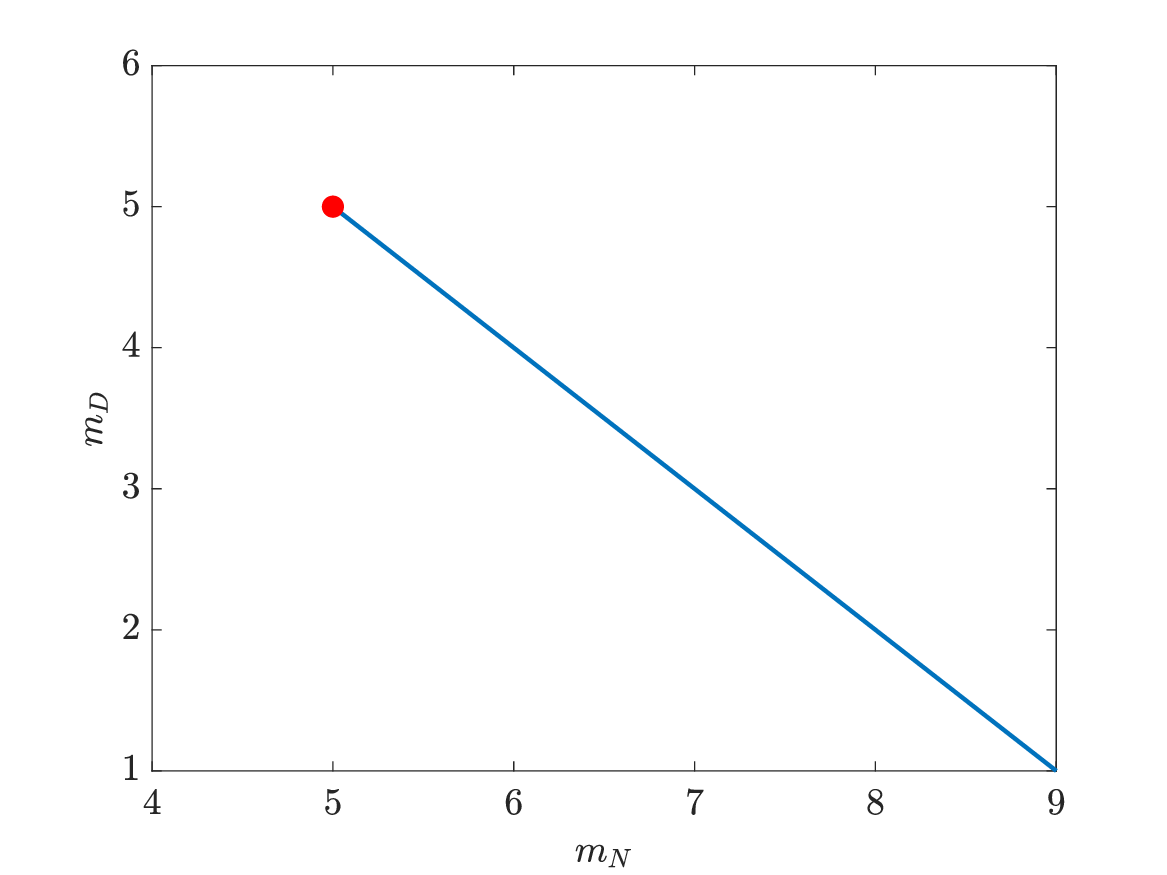}
\includegraphics[scale = 0.3]{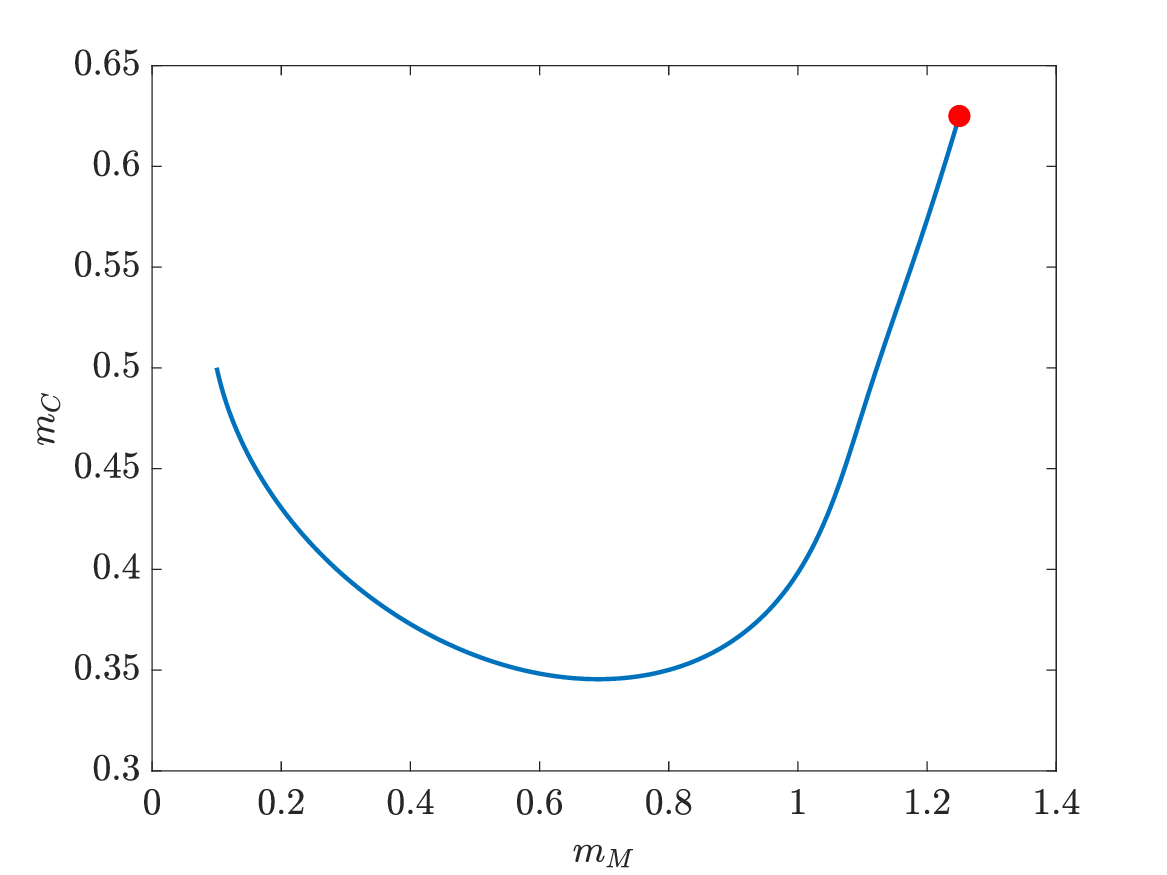}\\
\includegraphics[scale = 0.3]{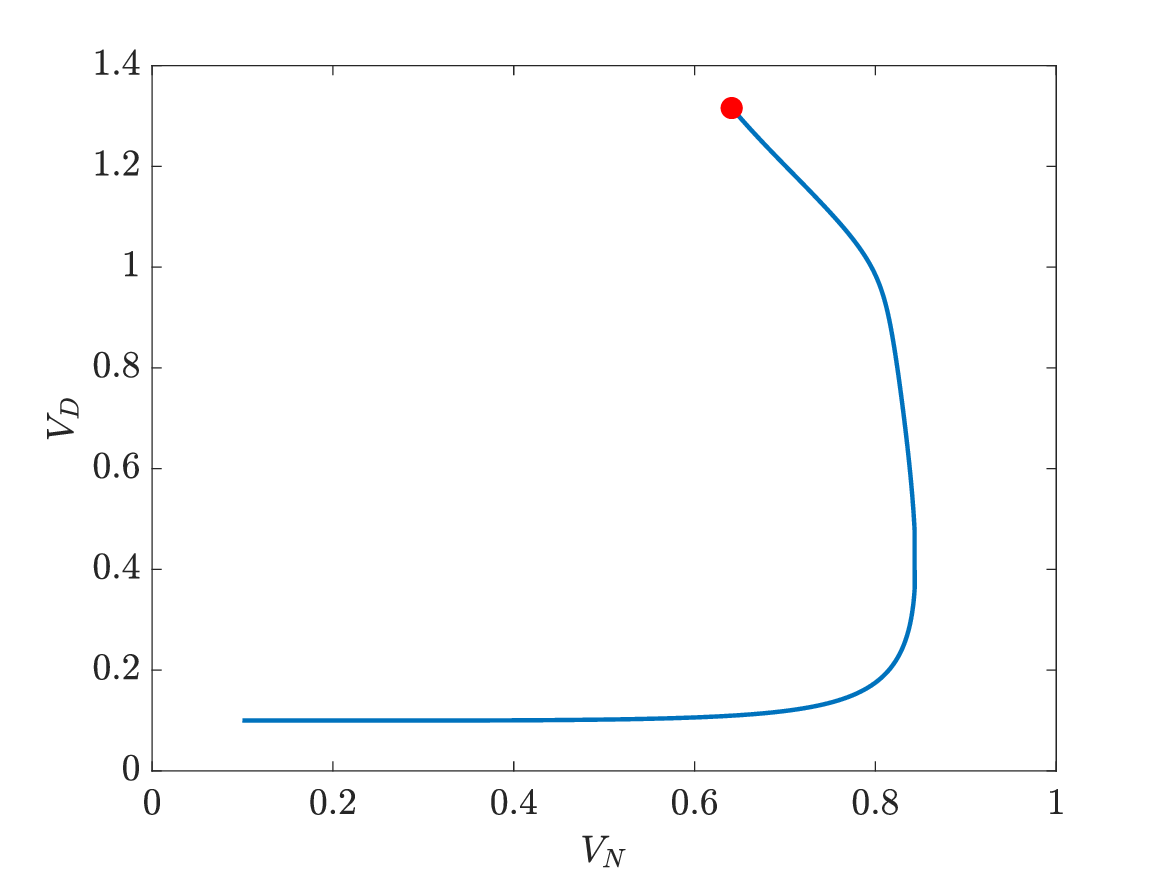}
\includegraphics[scale = 0.3]{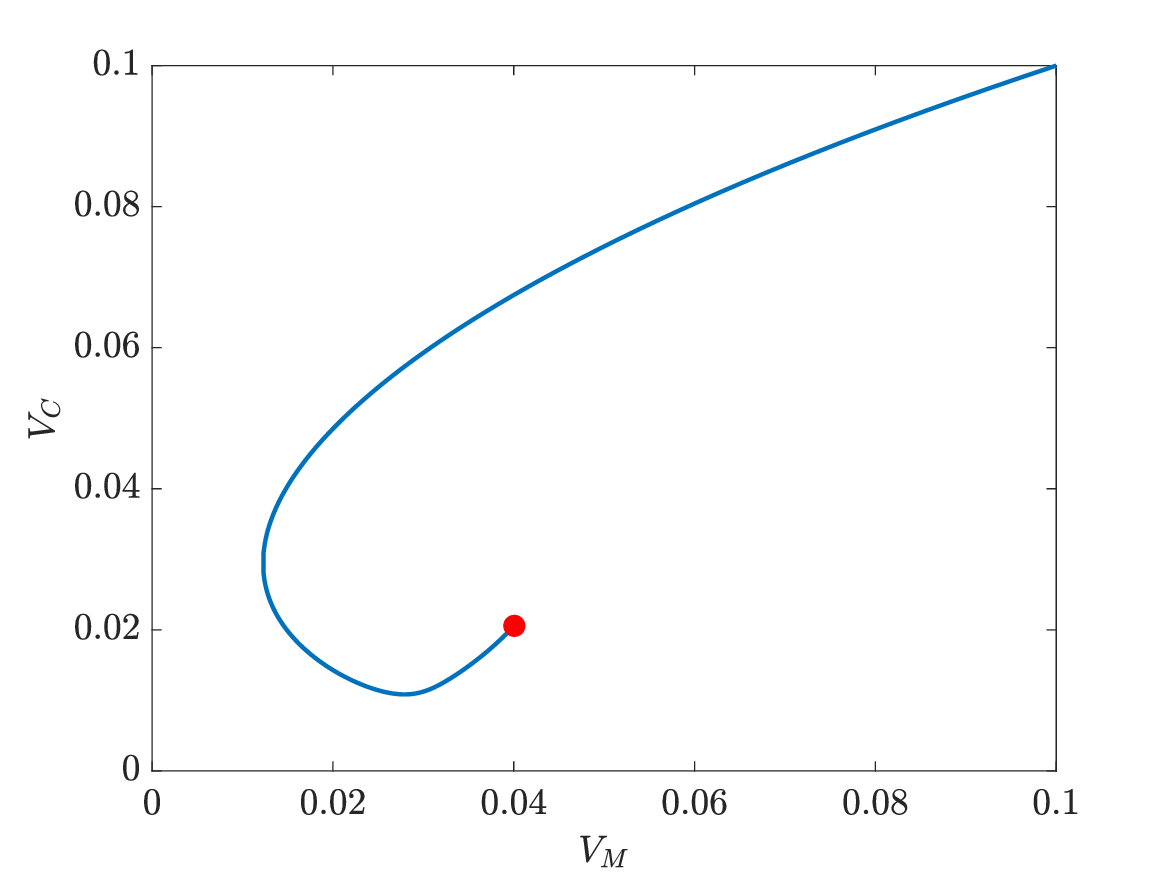}
\caption{{\bf Evolution of key observable macroscopic quantities for the mean-field model.} {\bf Top row.} Plots of the components of the numerical solution of the system~\eqref{eq:mean_K_NDMC} (left panel) and the system~\eqref{eq:var_MF_NDMC} (right panel) for $t \in [0,100]$. {\bf Center and bottom rows.} Corresponding trajectories of the numerical solutions in the phase planes $(m_N, m_D)$ (center, left panel), $(m_M, m_C)$ (center, right panel), $(V_N, V_D)$ (bottom, left panel), and $(V_M, V_C)$ (bottom, right panel). The red dots highlight the points corresponding to the components of the equilibria~\eqref{eq:MNDMCinfty} and \eqref{eq:VNDMCinfty}. Numerical simulations were carried out under initial conditions of components $m_N(0) = 9 $, $m_D(0)= 1$, $m_M(0) = 0.1$, $m_C(0) = 0.5$, and $V_J(0) = 0.1$ for all $J \in \{N,D,M,C\}$, and the parameter values $\beta_N = 0.2$, $\beta_D = 0.1$, $\beta_M = 0.2$, $\beta_C = 0.1$, $\gamma_M = \gamma_C = 0.05$, and $\sigma_J^2 = 0.01$  for all $J \in \{N,D,M,C\}$, which were chosen with exploratory aim and are to be intended as being dimensionless. }
    \label{fig:fig1}
\end{figure}

\section{Long-time asymptotics for the mean-field model}
\label{sect:3}
In this section, we study long-time asymptotics for the mean-field model~\eqref{eq:MFN}-\eqref{eq:MFoBCs}. In more detail, we first characterise the components of the quasi-equilibrium solution and show that they converge to equilibrium as $t \to \infty$ (see Section~\ref{sect:31}). Then we prove the convergence, with respect to an appropriate norm, of the solution to the quasi-equilibrium solution as $t \to \infty$, which implies, in the light of the convergence to equilibrium of the quasi-equilibrium solution as $t \to \infty$, the long-time convergence of the solution of the mean-field model to equilibrium (see Section~\ref{sect:32}).

\subsection{Characterisation of the quasi-equilibrium distribution}\label{sect:31}
The components of the quasi-equilibrium distribution, $f_J^q(x,t)$ for $J \in \{N,D,M,C\}$, of the mean-field model \eqref{eq:MFN}-\eqref{eq:MFoBCs} are the components of the solution to the following system of differential equations
\begin{equation}
\label{eq:fqNDMC}
\begin{split}
&(\beta_N m_C(t) x - \beta_D m_M(t) m_D(t))f_N^q(x,t) + \dfrac{\sigma^2_N m_C(t)}{2} \dfrac{\partial}{\partial x}(x^2 f_N^q(x,t)) = 0, \\
&(\beta_D m_M(t) x - \beta_N m_N(t) m_C(t))f_D^q(x,t) + \dfrac{\sigma^2_D m_M(t)}{2} \dfrac{\partial}{\partial x}(x^2 f_D^q(x,t)) = 0, \\
&(\beta_M x - \gamma_M  m_D(t))f_M^q(x,t) + \dfrac{\sigma^2_M}{2} \dfrac{\partial}{\partial x}(x^2 f_M^q(x,t)) = 0, \\
&(\beta_C x - \gamma_C  m_M(t))f_C^q(x,t) + \dfrac{\sigma^2_C}{2} \dfrac{\partial}{\partial x}(x^2 f_C^q(x,t)) = 0,
\end{split}
\quad x \in \mathbb{R}_+
\end{equation}
subject to the boundary conditions~\eqref{eq:MFNBCs} and~\eqref{eq:MFoBCs}.

Solving the system of differential equations~\eqref{eq:fqNDMC} subject to the boundary conditions~\eqref{eq:MFNBCs} and~\eqref{eq:MFoBCs}, we find that the non-negative components, with unit mass, of the quasi-equilibrium distribution are probability density functions of inverse Gamma distributions with constant shape parameters, $\nu_J$, and time-dependent scale parameters, $\omega_J(t)$. Specifically, for the $N$ and $D$ components we have
\begin{equation}
\label{eq:fqND}
f_N^q(x,t) = \dfrac{\omega_N^{\nu_N}(t)}{\Gamma(\nu_N)}\left(\dfrac{1}{x} \right)^{\nu_N+1} e^{-\omega_N(t)/x}, \qquad 
f_D^q(x,t) = \dfrac{\omega_D^{\nu_D}(t)}{\Gamma(\nu_D)}\left(\dfrac{1}{x} \right)^{\nu_D+1} e^{-\omega_D(t)/x},
\end{equation}
where $\Gamma$ is the Gamma function and
\begin{equation}
\label{eq:nuND}
\begin{split}
\nu_N &= 1+\dfrac{2\beta_N}{\sigma^2_N},\qquad \omega_N(t) = \dfrac{2\beta_D m_M(t) m_D(t)}{\sigma_N^2 m_C(t)}, \\
\nu_D &= 1+\dfrac{2\beta_D}{\sigma^2_D },\qquad \omega_D(t) = \dfrac{2\beta_N m_N(t) m_C(t)}{\sigma_D^2 m_M(t)}, 
\end{split}
\end{equation}
while for the $M$ and $C$ components we have
\begin{equation}
\label{eq:fqMC}
\begin{split}
f_M^q(x,t) = \dfrac{\omega_M^{\nu_M}(t)}{\Gamma(\nu_M)}\left(\dfrac{1}{x} \right)^{\nu_M+1} e^{-\omega_M(t)/x},\qquad
f_C^q(x,t) = \dfrac{\omega_C^{\nu_C}(t)}{\Gamma(\nu_C)}\left(\dfrac{1}{x} \right)^{\nu_C+1} e^{-\omega_C(t)/x},
\end{split}
\end{equation}
with 
\begin{equation}
\label{eq:nuMC}
\begin{split}
\nu_M &= 1+\dfrac{2\beta_M}{\sigma^2_M},\qquad \omega_M(t) = \dfrac{2\gamma_M}{\sigma_M^2} m_D(t), \\
\nu_C &= 1+\dfrac{2\beta_C}{\sigma^2_C },\qquad \omega_C(t) = \dfrac{2\gamma_C}{\sigma_C^2 } m_M(t). 
\end{split}
\end{equation}
Since the shape parameters defined via~\eqref{eq:nuND} and~\eqref{eq:nuMC} are such that $\nu_J>1$ for all $J \in \{N,D,M,C\}$, through the formula for the mean of the inverse Gamma distribution we find 
\[
m_N^q(t) = \dfrac{\omega_N(t)}{\nu_N-1} = \dfrac{\beta_D m_M(t) m_D(t)}{\beta_N m_C(t)}
\qquad m_D^q(t) =\dfrac{\omega_D(t)}{\nu_D-1} = \dfrac{\beta_N  m_C(t) m_N(t)}{\beta_D m_M(t)}
\]
and
\[
m_M^q(t) =  \dfrac{\omega_M(t)}{\nu_M-1} = \dfrac{\gamma_M m_D(t)}{\beta_M}, \qquad
m_C^q(t) =  \dfrac{\omega_C(t)}{\nu_C-1} =\dfrac{\gamma_C m_M(t)}{\beta_C}.
\]
%

Considering non-negative initial data that satisfy conditions~\eqref{eq:boundedmeanIC} along with condition~\eqref{eq:ICNpmD}, which gives the constraint corresponding to~\eqref{eq:NpmD}, the above expressions for $\omega_J(t)$, $J \in \{N,D,M,C\}$, along with the bounds~\eqref{eq:boundedmean} on $m_J(t)$, $J \in \{N,D,M,C\}$, imply that there exists some constants $0<c_J<C_J<\infty$ such that
\begin{equation}
\label{eq:boundsomegaJ}
c_J \le \omega_J(t) \le C_J \quad \forall t \ge 0, \quad J \in \{N,D,M,C\},
\end{equation}
while the above expressions for $m^q_J(t)$ imply that 
\begin{equation}
\label{eq:convmJ}
m_J^q(t) \to m_J^\infty \quad \text{as } t \to \infty, \quad J \in \{N,D,M,C\},
\end{equation}
with $m_J^\infty$ given by~\eqref{eq:MNDMCinfty}.  Furthermore, the expressions of $\omega_J(t)$, $J \in \{N,D,M,C\}$, ensure that there exist some constants $0\le C_J^p <\infty$ such that, denoting the derivative of $\omega_J(t)$ by $\omega_J'(t)$, 
\begin{equation}
\label{eq:boundsomegaJp}
 \omega_J^\prime(t) \le C_J^p, \qquad \forall t \ge 0, \quad J \in \{N,D,M,C\}, 
\end{equation}
since $m_C(t)>0$ and $m_M(t)>0$ for all $t\ge0$ (cf. \eqref{eq:boundedmean}). 

Moreover, under the additional assumptions~\eqref{eq:asssigmabetaJ} on the model parameters, which ensure that the shape parameters defined via~\eqref{eq:nuND} and~\eqref{eq:nuMC} are also such that $\nu_J>2$ for all $J \in \{N,D,M,C\}$, the variances of the inverse Gamma quasi-equilibrium distributions are given by
\[
V_N^q(t) = \dfrac{\beta_D^2 (m_M(t))^2 (m_D(t))^2 \sigma_N^2}{\beta_N^2 (m_C(t))^2(2\beta_N - \sigma^2_N)},\qquad
V_D^q(t) = \dfrac{\beta_N^2 (m_N(t))^2 (m_C(t))^2 \sigma_D^2}{\beta^2_D (m_M(t))^2(2\beta_D  -  \sigma_D^2)}
\]
and
\[
V_M^q(t) =\dfrac{\gamma_M^2 \sigma^2_M}{\beta_M^2(2\beta_M-\sigma^2_M)} (m_D(t))^2 , \qquad V_C^q(t) = \dfrac{\gamma_C^2 (m_M(t))^2 \sigma^2_C}{\beta_C^2(2\beta_C - \sigma^2_C)}.
\]
The above expressions for $V^q_J(t)$ along with the asymptotic results~\eqref{eq:convmJ} allow us to conclude that 
\begin{equation}
\label{eq:convVJ}
V_J^q(t) \to V_J^\infty \quad \text{as } t \to \infty, \quad J \in \{N,D,M,C\},
\end{equation}
with $V_J^\infty$ given by~\eqref{eq:VNDMCinfty}.

Finally, combining the results~\eqref{eq:convmJ} with the expressions~\eqref{eq:fqND} and~\eqref{eq:fqMC} for $f_J^q(x,t)$, $J \in \{N,D,M,C\}$, yields 
\begin{equation}
\label{eq:convfJ}
\|f_J^q(t) - f_J^\infty \|_{L^{\infty}(\mathbb{R}_+)} \to 0 \quad \text{as } t \to \infty, \quad J \in \{N,D,M,C\},
\end{equation}
with
\begin{equation}
\label{eq:finftyJ}
f_J^{\infty}(x) = \dfrac{(\omega^{\infty}_J)^{\nu_J}}{\Gamma(\nu_J)}\left(\dfrac{1}{x} \right)^{\nu_J+1} e^{-\omega^{\infty}_J/x},
\end{equation}
where $\nu_J = 1+\dfrac{2\beta_J}{\sigma^2_J}$ and
\begin{equation}
\label{eq:omegaJ}
\begin{split}
 \omega^{\infty}_N = \dfrac{2\beta_D \beta_C}{\sigma_N^2 \gamma_C} m^{\infty}_D, \qquad \omega^{\infty}_D = \dfrac{2\beta_N \gamma_C}{\sigma_D^2 \beta_C} m^{\infty}_N, \qquad \omega_M^{\infty} = \dfrac{2\gamma_M}{\sigma_M^2 } m^{\infty}_D, \qquad \omega^{\infty}_C =  \dfrac{2\gamma_C \gamma_M}{\sigma_C^2 \beta_M}  m^\infty_D.
\end{split}
\end{equation}

\subsection{Long-time convergence to the equilibrium distribution}\label{sect:32}
The convergence of the components, $f_J(x,t)$ for $J \in \{N,D,M,C\}$, of the solution of the mean-field model~\eqref{eq:MFN}-\eqref{eq:MFoBCs} to the corresponding quasi-equilibrium distribution functions, $f_J^q(x,t)$ for $J \in \{N,D,M,C\}$, defined via~\eqref{eq:fqND} and~\eqref{eq:fqMC}, can be obtained in the $\dot H_{-p}$ norm\footnote{The metric induced by this norm, introduced in~\cite{BG}, is closely related to the Fourier-based metrics introduced in~\cite{GTW} to study the trends to equilibrium for homogeneous kinetic equations; see also~\cite{TT} for further applications of such metrics.} defined as
\begin{equation}\label{def:H-pnorm}
\|f \|_{\dot H_{-p}}^2 = \int_{\mathbb R}|\xi|^{-2p}|\hat f(\xi)|^2 d\xi, \qquad p>\frac{1}{2},
\end{equation}
where $\hat f$ is the Fourier transform of the probability density function $f(x)$, that is, $\hat f(\xi) = \int_{\mathbb R_+}f(x)e^{-i\xi x}dx$. This is shown by Theorem~\ref{th:convergencetoeq}.

\begin{theorem}
\label{th:convergencetoeq}
Let $f_J(0,x)$, $J \in \{N,D,M,C\}$, be probability density functions on $\mathbb R_+$ such that conditions~\eqref{eq:unitmassIC}, \eqref{eq:boundedmeanIC}, \eqref{eq:ICNpmD}, and~\eqref{eq:boundedvarIC} hold and such that $\| f_J(0,x) - f_J^\infty(x)\|_{\dot H_{-p}}<\infty$ for $\frac{1}{2}<p<1$, with $f_J^\infty(x)$ given by~\eqref{eq:finftyJ}. Then, the solution to the mean-field model \eqref{eq:MFN}-\eqref{eq:MFoBCs} is such that 
\[
\sum_{J \in \{N,D,M,C\}} \| f_J - f_J^\infty\|_{\dot{H}_{-p}} \to 0 \quad \text{as } t \to \infty, \qquad \frac{1}{2}<p<1.
\]
\end{theorem}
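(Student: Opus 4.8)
The plan is to treat the four equations separately. Because the means $m_J(t)$ solve the \emph{closed} system~\eqref{eq:mean_K_NDMC}, which converges to the equilibrium~\eqref{eq:MNDMCinfty}, I regard them throughout as known, strictly positive and bounded coefficients (cf.~\eqref{eq:boundedmean}); each Fokker--Planck equation in~\eqref{eq:MFN}--\eqref{eq:MFo} then becomes a linear equation with time-dependent coefficients that decouples from the others. For each $J$ I split, by the triangle inequality,
\[
\|f_J - f_J^\infty\|_{\dot{H}_{-p}} \le \|f_J - f_J^q\|_{\dot{H}_{-p}} + \|f_J^q - f_J^\infty\|_{\dot{H}_{-p}},
\]
and control the two terms by different mechanisms; summing over $J$ at the very end yields the claim.

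\textbf{The quasi-equilibrium-to-equilibrium term.} This term is handled using Section~\ref{sect:31}. Since $f_J^q(\cdot,t)$ and $f_J^\infty$ are both probability densities, their difference has zero mass, so its Fourier transform vanishes at $\xi=0$ like $O(\xi)$ (the first moments being finite because $\nu_J>2$ under~\eqref{eq:asssigmabetaJ}); hence $|\xi|^{-2p}|\widehat{f_J^q-f_J^\infty}|^2 = O(|\xi|^{2-2p})$ is integrable at the origin, while for $p>\tfrac12$ it is integrable at infinity, so the norm is finite. Writing $f_J^q-f_J^\infty = (\omega_J(t)-\omega_J^\infty)\,\partial_\omega f_J^q$ at an intermediate scale parameter and invoking the bounds~\eqref{eq:boundsomegaJ}, this norm is $\le C\,|\omega_J(t)-\omega_J^\infty|$, which tends to $0$ by~\eqref{eq:convmJ}.

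\textbf{The main dissipation estimate.} The heart of the argument is the first term. Set $h_J = f_J - f_J^q$. Because $f_J^q(\cdot,t)$ is, by construction~\eqref{eq:fqNDMC}, annihilated by the frozen-in-time Fokker--Planck operator $\mathcal{L}_t$ on the right-hand side of~\eqref{eq:MFN}--\eqref{eq:MFo}, one has $\partial_t h_J = \mathcal{L}_t h_J - \partial_t f_J^q$, whence
\[
\tfrac12\tfrac{d}{dt}\|h_J\|^2_{\dot{H}_{-p}} = \operatorname{Re}\!\int |\xi|^{-2p}\overline{\hat h_J}\,\widehat{\mathcal{L}_t h_J}\,d\xi \;-\; \operatorname{Re}\!\int |\xi|^{-2p}\overline{\hat h_J}\,\partial_t\hat f_J^q\,d\xi .
\]
Writing the drift as $a(t)x-b(t)$ and the diffusion coefficient as $D(t)x^2$ (with $a,D>0$, $b\ge0$), the operator in Fourier reads $\widehat{\mathcal{L}_t h} = D\xi^2\partial_{\xi\xi}\hat h - a\xi\partial_\xi\hat h - ib\xi\hat h$, and I would integrate by parts against the weight $|\xi|^{-2p}$: the constant-drift term is a pure phase and contributes nothing to the real part; the linear drift produces $\tfrac{a}{2}(1-2p)\|h_J\|^2_{\dot H_{-p}}$; and the diffusion produces $\tfrac{D}{2}(2-2p)(1-2p)\|h_J\|^2_{\dot H_{-p}} - D\int|\xi|^{2-2p}|\partial_\xi\hat h_J|^2\,d\xi$. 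For $\tfrac12<p<1$ all three contributions are non-positive --- this is precisely what pins down the admissible range of $p$ --- giving
\[
\tfrac{d}{dt}\|h_J\|_{\dot{H}_{-p}} \le -\lambda_J(t)\,\|h_J\|_{\dot{H}_{-p}} + \|\partial_t f_J^q\|_{\dot{H}_{-p}}, \qquad \lambda_J(t) = \tfrac{2p-1}{2}\bigl(a(t)+(2-2p)D(t)\bigr),
\]
and since $a(t),D(t)$ are bounded below by positive constants (the means being bounded below), $\lambda_J(t)\ge\lambda_0>0$.

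\textbf{Closing and the main obstacle.} Since $f_J^q$ depends on time only through $\omega_J(t)$, the forcing is $\|\partial_t f_J^q\|_{\dot H_{-p}} = |\omega_J'(t)|\,\|\partial_\omega f_J^q\|_{\dot H_{-p}}$, where the second factor is uniformly bounded by~\eqref{eq:boundsomegaJ} and $|\omega_J'(t)|\to0$ because $m_J'(t)\to0$ (the right-hand sides of~\eqref{eq:mean_K_NDMC} vanishing at the convergent equilibrium). A Duhamel/Gr\"onwall comparison then gives $\|h_J(t)\|_{\dot H_{-p}}\le e^{-\lambda_0 t}\|h_J(0)\|_{\dot H_{-p}} + K_J\int_0^t e^{-\lambda_0(t-s)}|\omega_J'(s)|\,ds$, whose right-hand side tends to $0$ (the initial norm being finite by hypothesis together with~\eqref{eq:convfJ}). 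I expect the main obstacle to be making the Fourier-space dissipation estimate rigorous: one must justify the integrations by parts by verifying that the boundary contributions at $\xi=0$ and as $\xi\to\pm\infty$ vanish --- which relies on the zero-mass and finite-moment properties of $h_J$ together with sufficient regularity and decay of the solution --- and, relatedly, confirm that the no-flux condition at $x=0$ and the rapid vanishing of the inverse-Gamma profiles as $x\to0^+$ leave no boundary terms when $\mathcal{L}_t$ is recast in Fourier variables.
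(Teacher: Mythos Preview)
Your proof follows the same overall architecture as the paper's: the same triangle-inequality splitting into $\|f_J-f_J^q\|_{\dot H_{-p}}+\|f_J^q-f_J^\infty\|_{\dot H_{-p}}$, the same Fourier-side dissipation computation for $h_J=f_J-f_J^q$ (the paper obtains the coefficient $(2p-1)\bigl[\sigma_J^2\tfrac{3-2p}{4}+\beta_J\bigr]$ times $m_C$ or $m_M$, matching your $\lambda_J$ up to bookkeeping), and a Gr\"onwall-type closure. The genuine difference is in how the forcing term $\operatorname{Re}\int|\xi|^{-2p}\overline{\hat h_J}\,\partial_t\widehat{f_J^q}\,d\xi$ is estimated. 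The paper bounds $|\partial_t\widehat{f_J^q}|$ in $L^\infty_\xi$ via $\|\partial_t f_J^q\|_{L^1_x}\le D_J|\omega_J'|$, uses the pointwise estimate $|\hat h_J|\le M_J|\xi|$ near the origin, splits the $\xi$-integral at a radius $R$, and optimises over $R$ to reach a Bernoulli-type inequality $z_J'\le -\alpha z_J + C|\omega_J'|\,z_J^{(2-2p)/(3-2p)}$, which is then linearised by the substitution $y_J=z_J^{1/(3-2p)}$. Your route---a single Cauchy--Schwarz in $\dot H_{-p}$, observing that $\|\partial_\omega f_J^q\|_{\dot H_{-p}}$ is finite and uniformly bounded on the relevant $\omega$-range---is more direct and yields a linear differential inequality immediately; it buys simplicity at the cost of requiring the (easy) verification that $\partial_\omega f_J^q\in\dot H_{-p}$. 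One further point: you explicitly invoke $|\omega_J'(t)|\to0$, arguing it from $m_J'(t)\to0$; the paper's text only records that $\omega_J'$ is bounded (cf.~\eqref{eq:boundsomegaJp}), which by itself would not close the argument, so your formulation is in fact the sharper one here.
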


\begin{proof} $\phantom{.}$
\\
{\bf Step 1: $f_N \to f_N^q$ in $\dot H_{-p}$, with $\frac{1}{2}<p<1$, as $t \to \infty$.} Since the right-hand side of the Fokker-Planck equation~\eqref{eq:MFN} for $f_N(x,t)$ vanishes when $f_N = f_N^q$, with $f^q_N$ being the quasi-equilibrium distribution given by \eqref{eq:fqND}, we have
\[
\begin{split}
&\dfrac{\partial}{\partial t}(f_N - f_N^q)= \\
&\quad -\dfrac{\partial f_N^q}{\partial t} + \dfrac{\sigma_N^2m_C(t)}{2}\dfrac{\partial^2}{\partial x^2}[x^2 (f_N - f_N^q)] + \dfrac{\partial}{\partial x}\left[(\beta_N m_C x - \beta_D m_M m_D)(f_N-f_N^q)\right],
\end{split}\]
which, taking the Fourier transforms, gives
\begin{equation}
\label{eq:fourierN}
\begin{split}
&\dfrac{\partial}{\partial t}(\hat f_N - \widehat{f_N^q}) = \\
&\qquad  -\dfrac{\partial \widehat{f_N^q}}{\partial t}  + \dfrac{\sigma_N^2 m_C}{2}\xi^2 \dfrac{\partial^2}{\partial \xi^2}(\hat f_N  - \widehat{f_N^q})  -  \beta_N m_C \xi \dfrac{\partial}{\partial \xi}(\hat f_N - \widehat{f_N^q}) - i\beta_D m_M m_D\xi (\hat f_N-\widehat{f_N^q}).
\end{split}
\end{equation}
Through a method analogous to the one employed in~\cite{MTZ,TT}, from~\eqref{eq:fourierN} we find
\begin{equation}
\label{eq:estim_1}
\begin{split}
\dfrac{d}{dt}\int_{\mathbb R}|\xi|^{-2p}|\hat f_N - \widehat{f_N^q}|^2d\xi \le& \int_{\mathbb R}|\xi|^{-2p} \left| \dfrac{\partial {\widehat{f_N^q}}}{\partial t}\right| |\hat f_N - \widehat{f_N^q}|d\xi\\
&-(2p-1)m_C\left[ \sigma_N^2 \dfrac{3-2p}{4} + \beta_N\right]\int_{\mathbb R} |\xi|^{-2p}|\hat f_N - \widehat{f_N^q}|^2d\xi.
\end{split}
\end{equation}
Being $f_N^q$ an inverse Gamma distribution, the following estimate holds
\[
\int_{\mathbb R_+}\left| \dfrac{\partial f_N^q}{\partial t} \right| dx\le |\omega_N'(t)|B(t), \qquad B(t) = \dfrac{2\nu_N}{\omega_N(t)},
\]
where $\omega_N'(t)$ is the derivative of $\omega_N(t)$. Moreover, since $\omega_N(t)$ is bounded (cf. \eqref{eq:boundsomegaJ}), there exists a constant $D_N>0$ such that $B(t)\le D_N$ for all $t > 0$. Hence,
\[
\left| \dfrac{\partial \widehat{f_N^q}}{\partial t}(\xi,t)\right| = \left| \int_{\mathbb R_+} \dfrac{\partial f_N^q(x,t)}{\partial t} e^{-i\xi x} dx \right| \le    \int_{\mathbb R_+} \left|\dfrac{\partial f_N^q(x,t)}{\partial t} \right| dx \le |\omega'_N(t)| D_N \quad \forall t > 0.
\]
Since both $f_N$ and $f_N^q$ are probability density functions on $\mathbb{R}_+$ we have that $\hat f_N(0,t) = \widehat{f_N^q}(0,t) = 1$. Thus, expanding in Taylor series about $\xi = 0$ we obtain
\[
|\hat f_N - \widehat{f_N^q}| =\left| \dfrac{\partial\hat{f}_N}{\partial \xi}(\xi_*,t) - \dfrac{\partial\widehat{ f_N^q}}{\partial \xi}(\xi^*,t) \right| |\xi| \le |m_N(t)  + m_N^q(t)||\xi| \le M_N |\xi|, 
\]
with $M_N = \displaystyle{\max_{t>0}\{m_N(t) + m_N^q(t)\}}<\infty$. In fact, being $f_N$ a probability density function of finite mean value, we have
\[
\left| \dfrac{\partial \hat{f}_N}{\partial \xi}\right| = \left| \int_{\mathbb R_+} \dfrac{\partial f_N}{\partial x} e^{-i\xi x}dx \right| = \left| i\int_{\mathbb R_+}xf_N(x,t) e^{-i\xi x}dx \right|\le \left| \int_{\mathbb R_+}x f_N(x,t)dx \right |.
\] 
Therefore, for any $\xi \in \mathbb R$ we have
\[
|\xi|^{-2p} \left| \dfrac{\partial \widehat{f_N^q}}{\partial t} \right| |\hat f_N - \widehat{f_N^q}| \le M_N D_N |\omega_N'| |\xi|^{1-2p}
\]
and, for any $R>0$, the following estimate on the first term on the right-hand side of \eqref{eq:estim_1} holds
\[
\begin{split}
&\int_{\mathbb R}|\xi|^{-2p} \left| \dfrac{\partial \widehat{f_N^q}}{\partial t}\right| |\hat f_N - \widehat{f_N^q}|d\xi \le \\
&\qquad \int_{|\xi|\le R}  M_N |\omega_N'|B(t) |\xi|^{1-2p} d\xi + \int_{|\xi|> R}  |\xi|^{-2p} \left| \dfrac{\partial \widehat{f_N^q}}{\partial t}\right| |\hat f_N - \widehat{f_N^q}|d\xi = \\
&\qquad M_N |\omega_N'|B(t) \dfrac{R^{2-2p}}{1-p} + \int_{|\xi|> R}  |\xi|^{-2p} \left| \dfrac{\partial \widehat{f_N^q}}{\partial t} \right| |\hat f_N - \widehat{f_N^q}|d\xi . 
\end{split}\]
Furthermore, for any $p>1/2$, the last term in the above estimate can be further estimated as
\[
\begin{split}
&\int_{|\xi|> R}  |\xi|^{-2p} \left| \dfrac{\partial \widehat{f_N^q}}{\partial t}(\xi,t) \right| |\hat f_N - \widehat{f_N^q}|d\xi \le \\ &\left( \int_{|\xi|>R} |\xi|^{-2p}  \left|  \dfrac{\partial \widehat{f_N^q}}{\partial t}\right|^2 d\xi \right)^{1/2} \left( \int_{|\xi|>R} |\xi|^{-2p}|\hat f_N - \widehat{f_N^q}|^2d\xi \right)^{1/2} \le \\
& M_N D_N |\omega_N'|\left( \int_{|\xi|>R} |\xi|^{-2p} d\xi \right)^{1/2} \left( \int_{\mathbb R} |\xi|^{-2p}|\hat f_N - \widehat{f_N^q}|^2d\xi \right)^{1/2} =   \\
& M_N D_N|\omega_N^\prime | R^{1/2 - p} \left( \dfrac{2}{2p-1} \int_{\mathbb R} |\xi|^{-2p}|\hat f_N - \widehat{f_N^q}|^2d\xi \right)^{1/2} ,
\end{split}\]
and then, for any $1/2<p<1$, we have the following estimate on the first term on the right-hand side of~\eqref{eq:estim_1}
\[
\begin{split}
&\int_{\mathbb R}|\xi|^{-2p} \left| \dfrac{\partial \widehat{f_N^q}}{\partial t}(\xi,t) \right| |\hat f_N - \widehat{f_N^q}|d\xi\le \\
&\qquad M_N D_N |\omega_N'| \dfrac{R^{2-2p}}{1-p}  + M_N D_N|\omega_N^\prime | R^{1/2 - p} \left( \dfrac{2}{2p-1} \int_{\mathbb R} |\xi|^{-2p}|\hat f_N - \widehat{f_N^q}|^2d\xi \right)^{1/2}.
\end{split}
\]
As for the optimal value of $R>0$, we find 
\[
R^* = (1-p)  \left(\frac{2}{2p-1}\int_{\mathbb R}|\xi|^{-2p}|\hat f_N - \widehat{f_N^q}|^2 d\xi\right)^{1/2}>0
\]
for any $1/2<p<1$. Therefore, the inequality can be optimised for $R = R^*$ as follows
\[
\begin{split}
&\int_{\mathbb R}|\xi|^{-2p} \left| \dfrac{\partial \widehat{f_N^q}}{\partial t}(\xi,t) \right| |\hat f_N - \widehat{f_N^q}|d\xi \le  C_pM_N D_N|\omega_N'|\left(\int_{\mathbb R}|\xi|^{-2p}|\hat f_N -\widehat{f_N^q}|^2 d\xi \right)^{\frac{2-2p}{3-2p}},
\end{split}
\] 
with 
\[
C_p = \left[ \dfrac{2}{2p-1}\right]^{\frac{(2-2p)}{3-2p}} \left[ \dfrac{1}{1-p}\right]^{\frac{-1+2p}{3-2p}}  \left[ \left( \dfrac{p-\frac{1}{2}}{2-2p} \right)^{\frac{2(2-2p)}{3-2p}} + \left( \dfrac{p-\frac{1}{2}}{2-2p}\right)^{\frac{1-2p}{3-2p}}\right]>0.
\]
Setting 
\[
z_N(t) = \int_{\mathbb R}|\xi|^{-2p}|\hat f_N - \widehat{f_N^q}|^2d\xi,
\]
from \eqref{eq:estim_1} we then obtain
\[
\dfrac{dz_N(t)}{dt} \le - (2p-1)m_C\left(\sigma_N^2 \dfrac{3-2p}{4} + \beta_N\right)z_N(t)  + C_p M_N D_N |\omega_N'| z_N(t)^{\frac{2-2p}{3-2p}},
\]
from which the change of variable $y_N = z_N^{1/(3-2p)}$ yields
\[
\dfrac{dy_N(t)}{dt} \le - \dfrac{2p-1}{3-2p}m_C\left(\sigma_N^2 \dfrac{3-2p}{4} + \beta_N\right) y_N(t) + \dfrac{C_p M_N D_N |\omega_N'|}{3-2p},
\]
that is,
\[
y_N(t) \leq \left[ y_N(0) + \int_0^t b_N(s) \exp\left\{\int_0^s a_N(\tau)d\tau)\right\}ds\right]\exp\left\{-\int_0^t a_N(s)ds\right\},
\]
with 
\[
a_N(t) = \dfrac{2p-1}{3-2p}\left(\sigma_N^2 \dfrac{3-2p}{4} + \beta_N\right)m_C(t), \qquad b_N(t) = \dfrac{C_p M_N D_N |\omega_N'(t)|}{3-2p}. 
\]
Since $\omega_N'(t)$ is bounded (cf.~\eqref{eq:boundsomegaJp}) then $b_N(t)$ is bounded as well. Moreover, since $m_C(t)$ is non-negative and converges to a positive value as  $t \to \infty$ (cf. \eqref{eq:boundedmean} and~\eqref{eq:MNDMCinfty}), the same holds for $a_N(t)$, and thus $y_N(t) \to 0$ as $t\to \infty$, which in turn ensures that $z_N(t) \to 0$ as $t\to \infty$ as well. As a result, 
\begin{equation}
\label{eq:convfN}
\| f_N - f_N^q\|_{\dot{H}_{-p}} = \int_{\mathbb R}|\xi|^{-2p}|\hat f_N - \widehat{f_N^q}|^2 d\xi  \to 0 \quad \text{as } t \to \infty, \qquad \frac{1}{2}<p<1.
\end{equation}
\\

{\bf Step 2: $f_D \to f_D^q$ in $\dot H_{-p}$, with $\frac{1}{2}<p<1$, as $t \to \infty$.} Since the right-hand side of the Fokker-Planck equation~\eqref{eq:MFo} for $f_D(x,t)$ vanishes when $f_D = f_D^q$, with $f^q_D$ being the quasi-equilibrium distribution given by \eqref{eq:fqND}, we have 
\[
\begin{split}
&\dfrac{\partial}{\partial t} (f_D - f_D^q) = \\
&\quad - \frac{\partial f_D^q}{\partial t} + \dfrac{\sigma_D^2 m_M}{2}\dfrac{\partial^2}{\partial x^2} [x^2 (f_D - f_D^q)] + \dfrac{\partial}{\partial x} \left[ (\beta_D m_M x - \beta_N m_N m_C) (f_D - f_D^q)\right].
\end{split}\]
Taking the Fourier transforms gives
\begin{equation}
\label{eq:fourierD}
\begin{split}
&\dfrac{\partial}{\partial t} (\hat{f}_D - \widehat{f_D^q}) = \\
&\qquad -\dfrac{\partial \widehat{f_D^q}}{\partial t} + \dfrac{\sigma_D^2 m_M}{2}\xi^2\dfrac{\partial^2}{\partial \xi^2} (\hat{f}_D - \widehat{f_D^q}) - \beta_D m_M \xi \dfrac{\partial}{\partial \xi}(\hat f_D - \widehat{f_D^q})  - i \beta_Nm_N m_C \xi(\hat f_D - \widehat{f_D^q}),
\end{split}
\end{equation}
and then, proceeding similarly to Step 1, exploiting the fact that $\omega_D(t)$ is bounded (cf. \eqref{eq:boundsomegaJ}) and thus there exists a constant $D_D>0$ such that $\dfrac{2\nu_D}{\omega_D(t)}\le D_D$ for all $t > 0$, and considering $\frac{1}{2}<p<1$, we find
\[\begin{split}
&\dfrac{d}{dt} \int_{\mathbb R} |\xi|^{-2p}|\hat f_D - \widehat{f_D^q}|^2 d\xi \ \\
&\qquad \le-(2p-1) m_M \left(\sigma^2_D \dfrac{3-2p}{4} + \beta_D \right)\int_{\mathbb R} |\xi|^{-2p}|\hat f_D - \widehat{f_D^q}|^2 d\xi +\\
&\qquad\qquad  C_p M_D D_D |\omega_D'| \left( \int_{\mathbb R} |\xi|^{-2p} |\hat f_D - \widehat{f_D^q}|^2d\xi\right)^{\frac{2-2p}{3-2p}}, 
\end{split}\]
where $\omega_D'(t)$ is the derivative of $\omega_D(t)$ and $M_D = \displaystyle{\max_{t>0}\{m_D(t) + m_D^q(t)\}}$. Hence, as similarly done in Step 1, setting first
$$
z_D(t) = \int_{\mathbb R} |\xi|^{-2p}|\hat f_D - \widehat{f_D^q}|^2d\xi
$$
and then $y_D = z_D^{\frac{1}{3-2p}}$, we find
\[
y_D(t) \le \left\{ y_D(0) + \int_0^t b_D(s) \exp\left[\int_0^s a_D(\tau)d\tau)\right]ds\right\}\exp\left[-\int_0^t a_D(s)ds\right],
\]
where
\[
a_D(t) = \dfrac{2p-1}{3-2p} \left( \sigma^2_D \dfrac{3-2p}{4} + \beta_D\right) m_M(t), \qquad b_D(t) = \dfrac{C_p M_D D_D |\omega_D'(t)|}{3-2p}. 
\]
Since $\omega_D'(t)$ is bounded (cf.~\eqref{eq:boundsomegaJp}) then $b_D(t)$ is bounded as well. Moreover, since $m_M(t)$ is non-negative and converges to a positive value as  $t \to \infty$ (cf. \eqref{eq:boundedmean} and~\eqref{eq:MNDMCinfty}), the same holds for $a_D(t)$, and thus $y_D(t) \to 0$ as $t\to \infty$, which in turn ensures that $z_D(t) \to 0$ as $t\to \infty$ as well. As a result, 
\begin{equation}
\label{eq:convfD}
\| f_D - f_D^q\|_{\dot{H}_{-p}} = \int_{\mathbb R}|\xi|^{-2p}|\hat f_D - \widehat{f_D^q}|^2 d\xi  \to 0 \quad \text{as } t \to \infty, \qquad \frac{1}{2}<p<1.
\end{equation}

{\bf Step 3: $f_M \to f_M^q$ and $f_C \to f_C^q$ in $\dot H_{-p}$ , with $\frac{1}{2}<p<1$, as $t \to \infty$.}   Since the right-hand side of the Fokker-Planck equation~\eqref{eq:MFo} for $f_M(x,t)$ vanishes when $f_M = f_M^q$, with $f^q_M$ being the quasi-equilibrium distribution given by \eqref{eq:fqMC}, we have 
\[
\dfrac{\partial}{\partial t} (f_M - f_M^q) = \dfrac{\partial}{\partial x} \left[ (\beta_M x - \gamma_M m_D(t))(f_M - f_M^q)\right] +\dfrac{\sigma_M^2}{2}\dfrac{\partial^2}{\partial x^2} [x^2 (f_M- f_M^q)].
\]
Then, through a method analogous to the ones employed in Steps 1 and 2, setting
\[
z_M(t) = \int_{\mathbb R} |\xi|^{-2p}|\hat f_M - \widehat{f_M^q}|^2 d\xi, 
\]
we find
\[
\dfrac{d}{dt} z_M(t) \le \int_{\mathbb R} |\xi|^{-2p} \left| \dfrac{\partial \widehat{f_M^q}}{\partial t} \right| |\hat f_M - \widehat{f_M^q}|  d\xi - (2p-1) \left[ \dfrac{\sigma_M^2(3-2p)}{4} + \beta_M\right] z_M. 
\]
Using the fact that, since $\omega_M(t)$ is bounded (cf. \eqref{eq:boundsomegaJ}) and thus there exists a constant $D_M>0$ such that $\dfrac{2\nu_M}{\omega_M(t)}\le D_M$ for all $t>0$, and considering $\frac{1}{2}<p<1$, the following estimate holds
\[
\int_{\mathbb R} |\xi|^{-2p} \left| \dfrac{\partial \widehat{f_M^q}}{\partial t}\right| |\hat f_M - \widehat{f_M^q}| d\xi \le C_p M_M D_M z_M^{\frac{2-2p}{3-2p}},
 \]
where $M_M = \displaystyle{\max_{t>0}\{m_M(t) + m_M^q(t)\}}$, from the above differential inequality we find
\[
\dfrac{d}{dt}z_M(t) \le -(2p-1)\left[ \dfrac{\sigma_M^2(3-2p)}{4} + \beta_M\right]z_M + C_p M_M D_M |\omega_M'| z_M^{\frac{2-2p}{3-2p}},
\]
where $\omega_M'(t)$ is the derivative of $\omega_M(t)$. Setting $y_M = z_M^{\frac{2-2p}{3-2p}}$, the latter differential inequality yields
\[
\dfrac{d}{dt} y_M\le -(2p-1)\left[ \dfrac{\sigma_M^2(3-2p)}{4} + \beta_M\right]y_M + \dfrac{C_p M_M D_M |\omega_M'|}{3-2p},
\]
from which, noting that $\omega_M'(t)$ is bounded (cf.~\eqref{eq:boundsomegaJp}), we conclude that $y_M(t) \to 0$ as $t\to \infty$, which in turn ensures that $z_M(t) \to 0$ as $t\to \infty$ as well. As a result, 
\begin{equation}
\label{eq:convfM}
\| f_M - f_M^q\|_{\dot{H}_{-p}} = \int_{\mathbb R}|\xi|^{-2p}|\hat f_M - \widehat{f_M^q}|^2 d\xi  \to 0 \quad \text{as } t \to \infty, \qquad \frac{1}{2}<p<1.
\end{equation}

In a similar way, introducing the notation 
$$
z_C = \displaystyle{\int_{\mathbb R}|\xi|^{-2p}|\hat f_C - \widehat{f^q_C}|^2 d\xi}
$$
and setting $y_C = z_C^{\frac{2-2p}{3-2p}}$, we find
\begin{equation}
\label{eq:Bern_C}
\dfrac{d}{dt} y_C\le -(2p-1) \left[ \dfrac{\sigma_C^2(3-2p)}{4} + \beta_C \right] y_C  + \dfrac{C_pM_C D_C |\omega_C'|}{3-2p},
\end{equation}
where $\omega_C'(t)$ is the derivative of $\omega_C(t)$ and $M_C = \displaystyle{\max_{t>0}\{m_C(t) + m_C^q(t)\}}$. From~\eqref{eq:Bern_C}, noting that $\omega_C'(t)$ is bounded (cf.~\eqref{eq:boundsomegaJp}), we conclude that $y_C(t) \to 0$ as $t\to \infty$, which in turn ensures that $z_C(t) \to 0$ as $t\to \infty$ as well. As a result, 
\begin{equation}
\label{eq:convfC}
\| f_C - f_C^q\|_{\dot{H}_{-p}} = \int_{\mathbb R}|\xi|^{-2p}|\hat f_C - \widehat{f_C^q}|^2 d\xi  \to 0 \quad \text{as } t \to \infty, \qquad \frac{1}{2}<p<1.
\end{equation}

{\bf Step 4: $f_J \to f_J^{\infty}$ in $\dot H_{-p}$, with $\frac{1}{2}<p<1$, as $t \to \infty$ for $J \in \{N,D,M,C\}$.} We start by noting that  
\[
\sum_{J \in \{N,D,M,C\}} \| f_J - f_J^\infty\|_{\dot{H}_{-p}} \le \sum_{J \in \{N,D,M,C\}} \| f_J - f_J^q\|_{\dot{H}_{-p}} + \sum_{J \in \{N,D,M,C\}} \| f^q_J - f_J^\infty\|_{\dot{H}_{-p}}.
\]
Then, using the fact that the results~\eqref{eq:convfN}, \eqref{eq:convfD}, \eqref{eq:convfM}, and~\eqref{eq:convfC} ensure that the first term on the right-hand side vanishes as $t \to \infty$ for $\frac{1}{2}<p<1$ and the result~\eqref{eq:convfJ} ensures that also the second term on the right-hand side vanishes as $t \to \infty$, we conclude that 
\[
\sum_{J \in \{N,D,M,C\}} \| f_J - f_J^\infty\|_{\dot{H}_{-p}} \to 0 \quad \text{as } t \to \infty, \qquad \frac{1}{2}<p<1.
\]
\end{proof}

\section{Numerical tests}\label{sect:4}
We now illustrate, by means of a sample of results of numerical simulations, the theoretical results obtained in the previous sections. In more detail, in Section~\ref{sec:numres:1} we numerically show that, under the time scaling~\eqref{eq:timescalingeps} and the parameter scaling~\eqref{eq:paramscalingeps}, when the scaling parameter $\epsilon$ is sufficiently small, the solution of the kinetic model~\eqref{eq:system} converges to the solution of the mean-field model~\eqref{eq:MFN}-\eqref{eq:MFoBCs}. Then, in Section~\ref{sec:numres:2} we numerically show that the components of the solution of the mean-field model converge, in the norm defined via~\eqref{def:H-pnorm}, to the equilibrium distributions defined via~\eqref{eq:finftyJ} as $t \to \infty$. A summary of the set-up of numerical simulations  is provided in Section~\ref{sec:numres:0}.

\subsection{Set-up of numerical simulations}
\label{sec:numres:0}
Numerical simulations of the kinetic model are carried out through direct Monte Carlo (DSMC) methods~\cite{PR,PT}, employing a system of $N = 10^5$ particles. Distribution functions are reconstructed through standard histograms and the corresponding moments are estimated through classical Monte Carlo estimators from the particles' system. Moreover, the system of Fokker-Planck equations of the mean-field model is solved numerically through structure-preserving methods \cite{PZ}. These methods are capable of reproducing long-time statistical properties of  exact steady-state solutions with arbitrary accuracy, while ensuring preservation of positivity of the solution along with consistent entropy dissipation. For the structure preserving scheme, we employ a uniform grid over the interval $[0,L]$ with $L=12$, which is discretised with $N_x = 801$ grid points. We use a 6th order discretization in space, with step $\Delta x$, and a 2nd order semi-implicit discretization in time with step $\Delta t = 0.5\Delta x$. 

The parameter values used to carry out numerical simulations are provided in Table~\ref{Table:params}. Such parameter values were chosen with exploratory aim (i.e. to observe the effects of all the mechanisms incorporated into the model over a reasonable computational time scale) and they are to be intended as being dimensionless. Furthermore, we define the initial cell distribution functions as the following uniform distributions 
\begin{equation}
\label{eq:init_f}
f_J(x,0) = 
\begin{cases}
 \dfrac{1}{2m_{J}(0)} & x \in \left[m_J(0) - \dfrac{1}{2}\sqrt{\dfrac{6}{5}},m_J(0) + \dfrac{1}{2}\sqrt{\dfrac{6}{5}}\right], \\
 0 & \textrm{elsewhere}, 
\end{cases}
\end{equation}
with $m_{N}(0)= 9, m_{D}(0)=1, m_{M}(0)= 0.6, m_{C}(0)= 0.6$. The distributions defined via~\eqref{eq:init_f} are such that $V_J(0) = 0.1$ for all $J \in \{N,D,M,C\}$. The initial distributions of particles for Monte Carlo simulations are sampled from~\eqref{eq:init_f}.


\begin{table}
  \centering
  \caption{Parameter values used in numerical simulations.}
  \begin{tabular}{|c|c|l|}
    \hline
    {\footnotesize \textbf{Parameter}} & {\footnotesize \textbf{Value}} & {\footnotesize \textbf{Description}}\\
    \hline\hline
    {\footnotesize$\beta_N$} & {\footnotesize 0.2 }& {\footnotesize Maximum relative change in normal cell density due to interactions with T lymphocytes} \\ 
    {\footnotesize$\beta_D$} & {\footnotesize 0.1  }& {\footnotesize Maximum relative change in damaged cell density due to interactions with macrophages} \\
    {\footnotesize$\beta_M$} & {\footnotesize 0.2  }& {\footnotesize Rate of natural decay of macrophage density} \\
    {\footnotesize$\beta_C$} & {\footnotesize 0.1 }& {\footnotesize Rate of natural decay of T lymphocyte density} \\
    {\footnotesize$\sigma_N$} & {\footnotesize 0.01}& {\footnotesize Standard deviation of zero-mean random variable modeling fluctuations in normal cell density} \\
    {\footnotesize$ \sigma_D$} & {\footnotesize 0.01 }& {\footnotesize Standard deviation of zero-mean random variable modeling fluctuations in damaged cell density} \\
    {\footnotesize$ \sigma_M$} & {\footnotesize 0.01 }& {\footnotesize Standard deviation of zero-mean random variable modeling fluctuations in macrophage density} \\
    {\footnotesize$ \sigma_C$} & {\footnotesize 0.01 }& {\footnotesize Standard deviation of zero-mean random variable modeling fluctuations in T-lymphocyte density} \\
    {\footnotesize$ \gamma_M$} & {\footnotesize 0.05 }& {\footnotesize Rate of macrophage density growth induced by damaged cells} \\
    {\footnotesize$\gamma_C$} & {\footnotesize 0.05 }& {\footnotesize Rate of T-lymphocyte density growth induced by macrophages}  \\
    \hline\hline
  \end{tabular}
  \label{Table:params}
\end{table}

\subsection{Consistency between the kinetic model and its mean-field limit}
\label{sec:numres:1}
The plots in Figures~\ref{fig:mean_evol} and~\ref{fig:var_evol} display the evolution of the means, $m_J(t)$ defined via~\eqref{eq:mJ}, and the variances, $V_J(t)$ defined via~\eqref{eq:VJ}, of the distribution functions, $f_J(x,t)$ with $J \in \{N,D,M,C\}$, of the kinetic model, under the time scaling~\eqref{eq:timescalingeps} and the parameter scaling~\eqref{eq:paramscalingeps}, for different values of the scaling parameter $\epsilon$, i.e. $\epsilon \in \{10^{-3},10^{-2},10^{-1}, 5 \times 10^{-1}\}$, as well as the means and the variances of the distribution functions of the corresponding mean-field model. As expected, based on the systems of differential equations for the means and the variances of the kinetic model and the corresponding systems for the mean-field model derived in Sections~\ref{sec:macrokin} and~\ref{sec:macromf}, changing the value of $\epsilon$ does not lead to appreciable changes in the dynamics of the means, while it does lead to more significant changes in the dynamics of the variances. Moreover, for $\epsilon$ sufficiently small, there is an excellent quantitative agreement between the dynamics not only of the means but also of the variances of the kinetic model and those of the mean-field model. This validates the formal limiting procedure employed to obtain the system of Fokker-Planck equations of the mean-field model from the kinetic model, under the time scaling~\eqref{eq:timescalingeps} and the parameter scaling~\eqref{eq:paramscalingeps},  in the limit  $\epsilon \rightarrow 0^+$.

\begin{figure}
    \centering
        \includegraphics[width=0.48\textwidth]{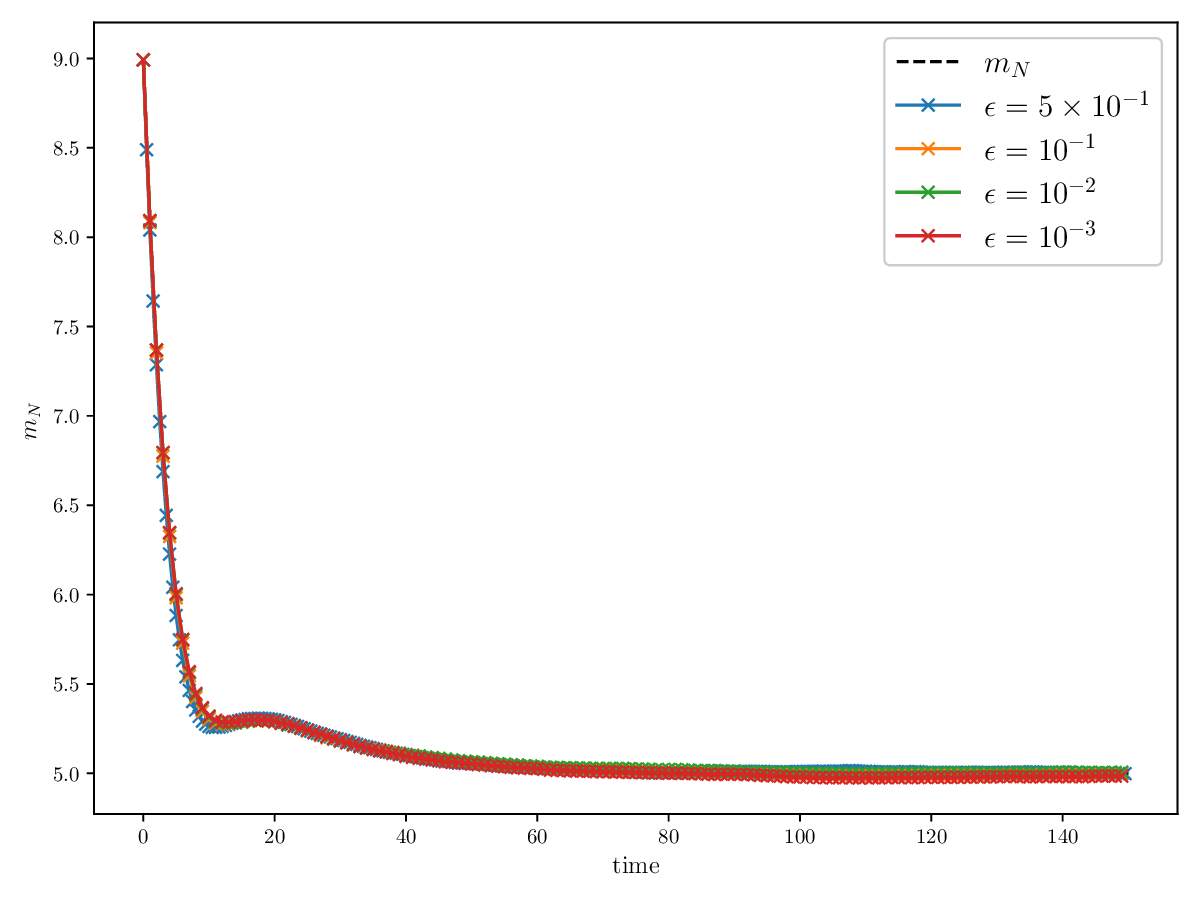}
        \includegraphics[width=0.48\textwidth]{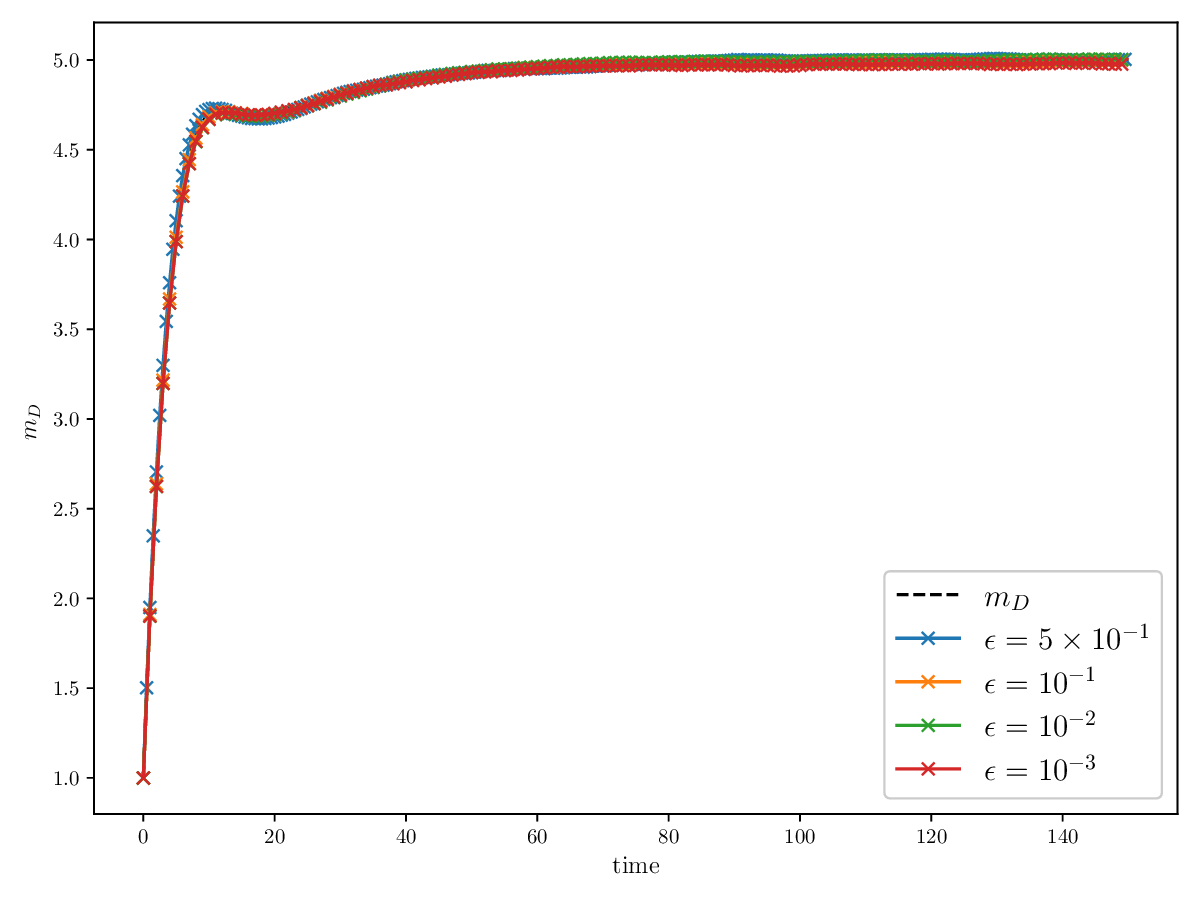}
        \includegraphics[width=0.48\textwidth]{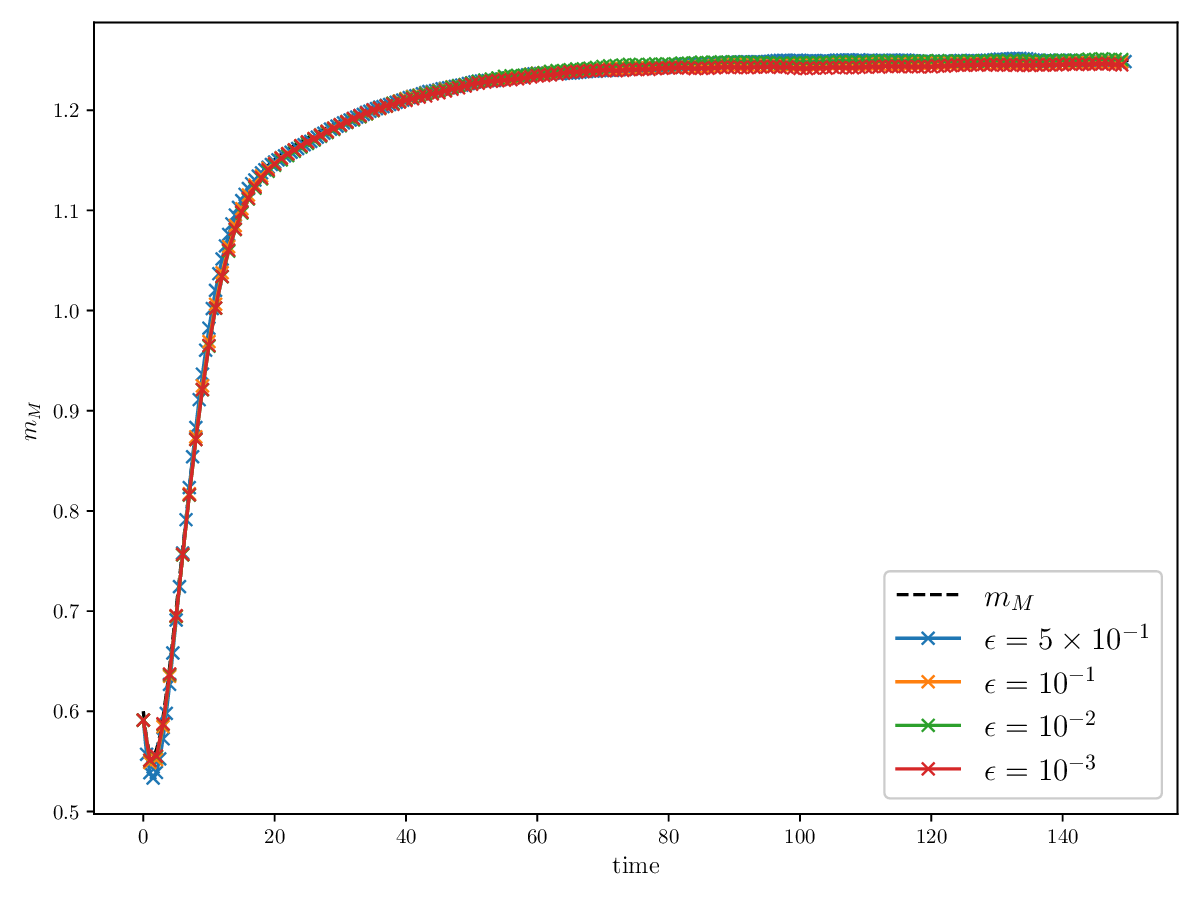}
        \includegraphics[width=0.48\textwidth]{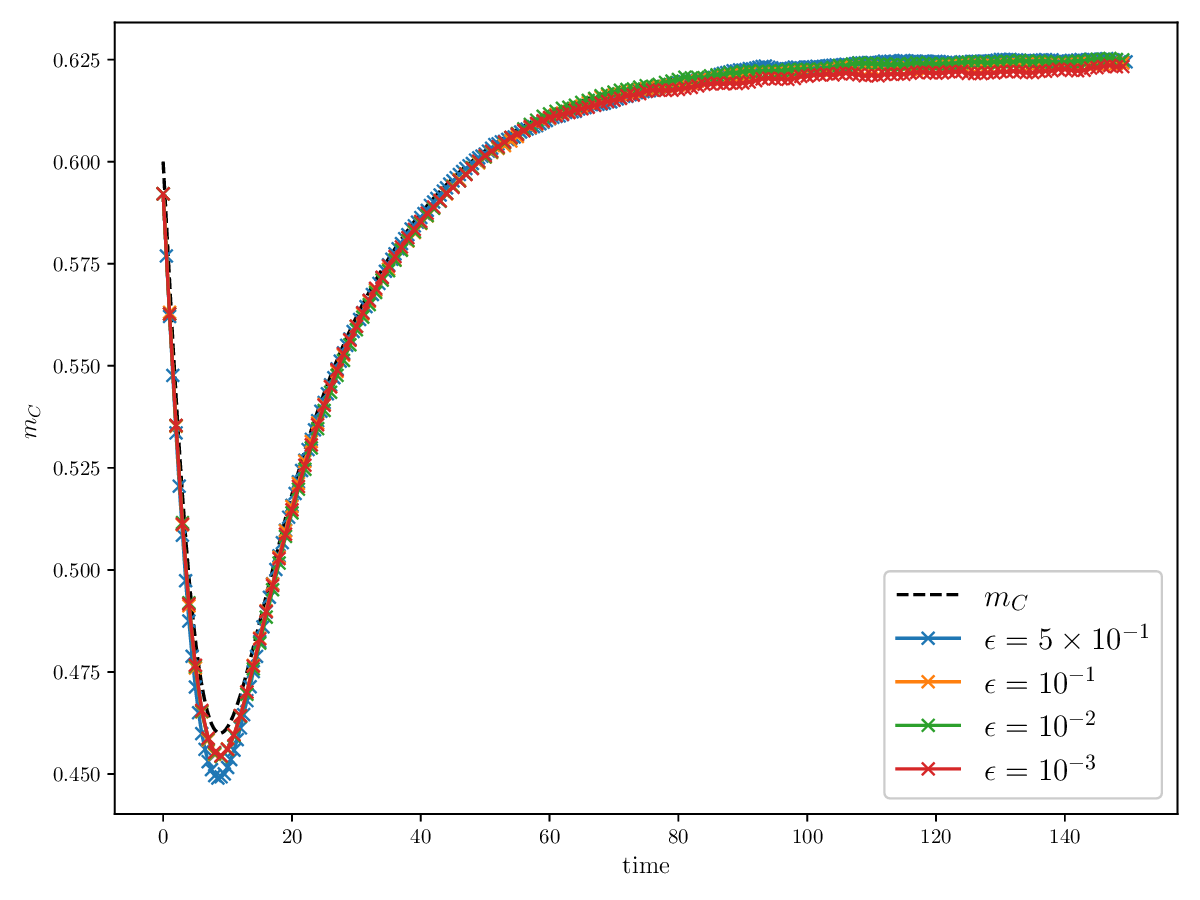}
    \caption{\textbf{Consistency between the kinetic model and its mean-field limit.} Solid, coloured lines highlight the dynamics of the means, $m_J(t)$ defined via~\eqref{eq:mJ}, of the distribution functions, $f_J(x,t)$, with $J \in \{N,D,M,C\}$, of the kinetic model~\eqref{eq:system}, under the time scaling~\eqref{eq:timescalingeps} and the parameter scaling~\eqref{eq:paramscalingeps}, for the values of the scaling parameter $\epsilon$ provided in the legends. Dashed, black lines highlight the dynamics of the means of the distribution functions of the mean-field model~\eqref{eq:MFN}-\eqref{eq:MFoBCs}. Numerical simulations were carried out under initial conditions of components defined via~\eqref{eq:init_f} and the parameter values listed in Table~\ref{Table:params}.
    }
    \label{fig:mean_evol}
\end{figure}


\begin{figure}[H]
    \centering
    \begin{subfigure}[b]{0.48\textwidth}
        \centering
        \includegraphics[width=\textwidth]{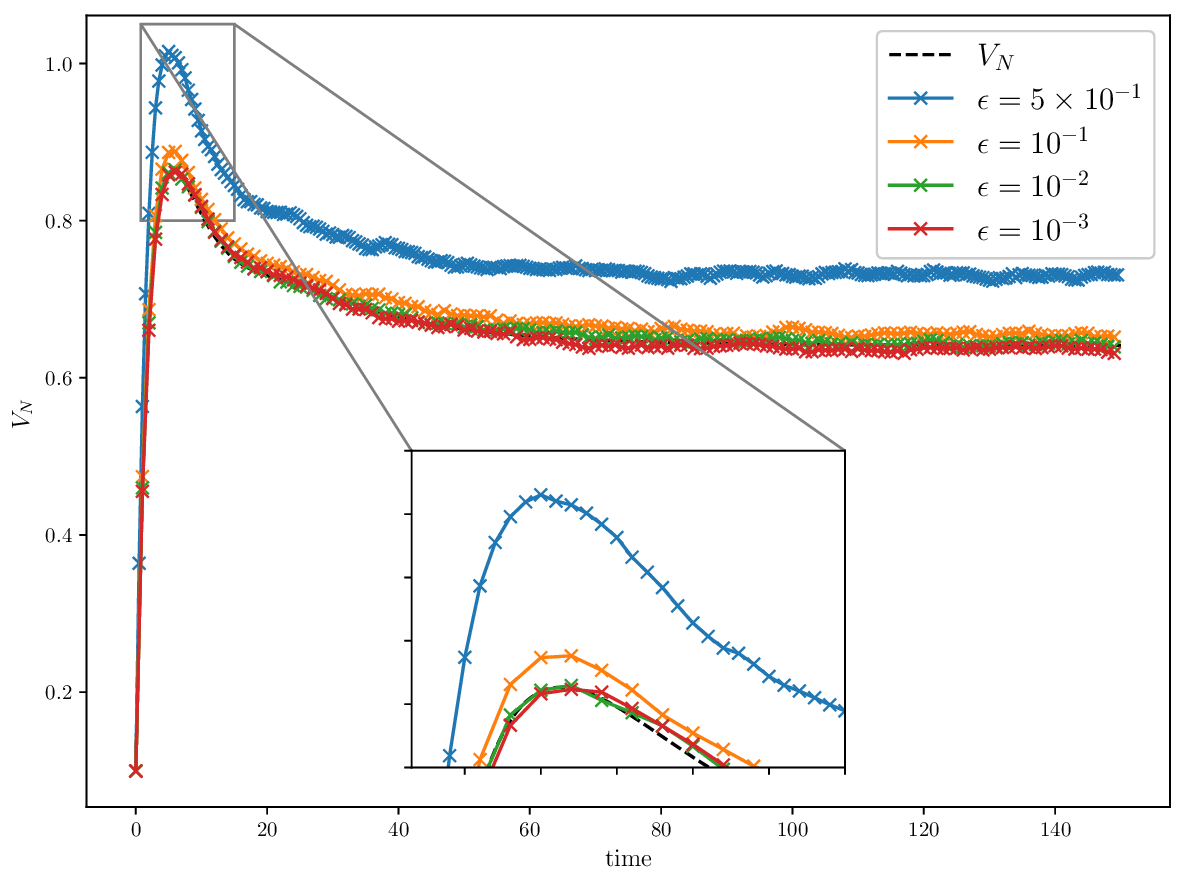}
    \end{subfigure}
    \hfill
    \begin{subfigure}[b]{0.48\textwidth}
        \centering
        \includegraphics[width=\textwidth]{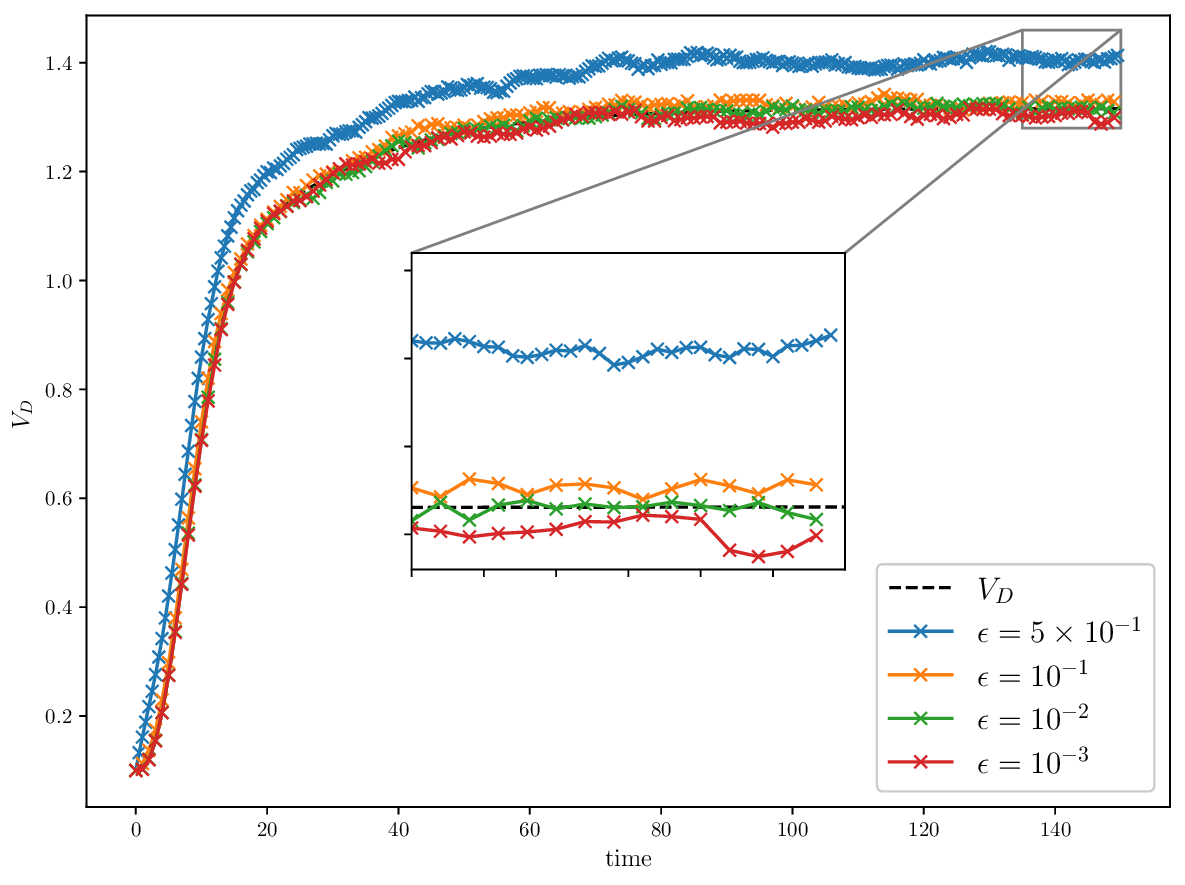}
    \end{subfigure}
    \begin{subfigure}[b]{0.48\textwidth}
        \centering
        \includegraphics[width=\textwidth]{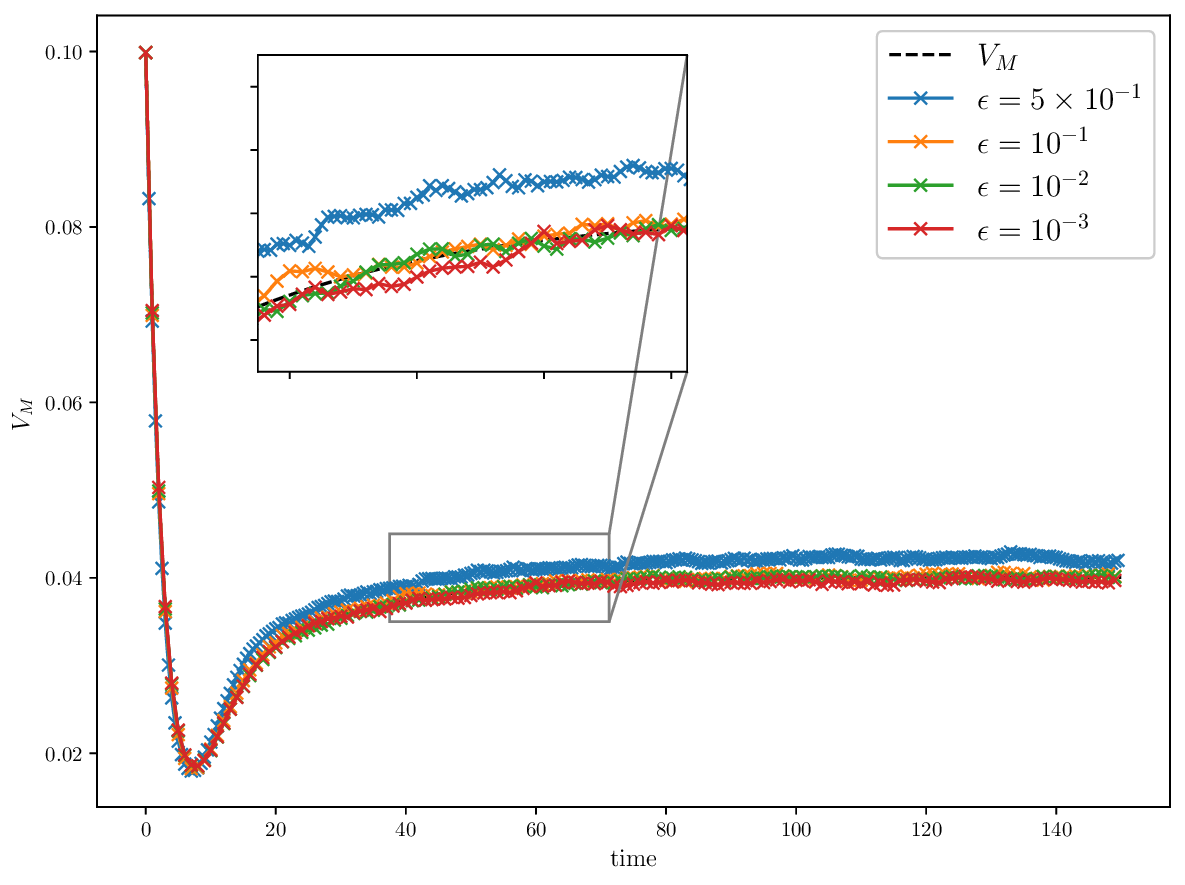}
    \end{subfigure}
    \hfill
    \begin{subfigure}[b]{0.48\textwidth}
        \centering
        \includegraphics[width=\textwidth]{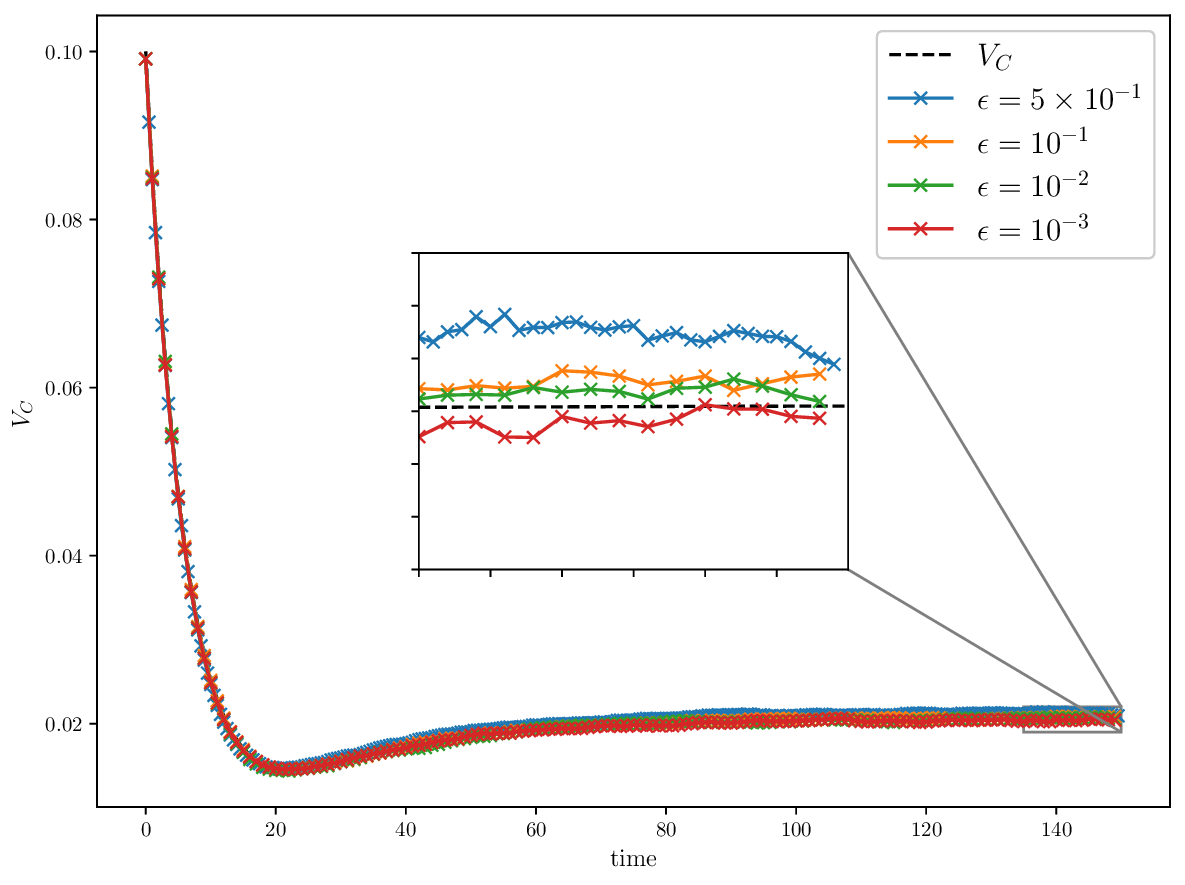}
    \end{subfigure}
    \caption{\textbf{Consistency between the kinetic model and its mean-field limit.} Solid, coloured lines highlight the dynamics of the variances, $V_J(t)$ defined via~\eqref{eq:VJ}, of the distribution functions, $f_J(x,t)$, with $J \in \{N,D,M,C\}$, of the kinetic model~\eqref{eq:system}, under the time scaling~\eqref{eq:timescalingeps} and the parameter scaling~\eqref{eq:paramscalingeps}, for the values of the scaling parameter $\epsilon$ provided in the legends. Dashed, black lines highlight the dynamics of the variances of the distribution functions of the mean-field model~\eqref{eq:MFN}-\eqref{eq:MFoBCs}. Numerical simulations were carried out under initial conditions of components defined via~\eqref{eq:init_f} and the parameter values listed in Table~\ref{Table:params}.}
    \label{fig:var_evol}
\end{figure}

\subsection{Convergence to equilibrium of the mean-field model} 
\label{sec:numres:2}
The plots in Figure~\ref{fig:sol_evol} summarise the time evolution of the components, $f_J(x,t)$ with $J \in \{N,D,M,C\}$, of the solution of the system of Fokker-Planck equations of the mean-field model and show that they converge to the equilibrium distributions, $f_J^\infty(x)$ with $J \in \{N,D,M,C\}$, defined via~\eqref{eq:finftyJ} as $t \to \infty$. 
\begin{figure}
    \centering
        \includegraphics[width=0.48\textwidth]{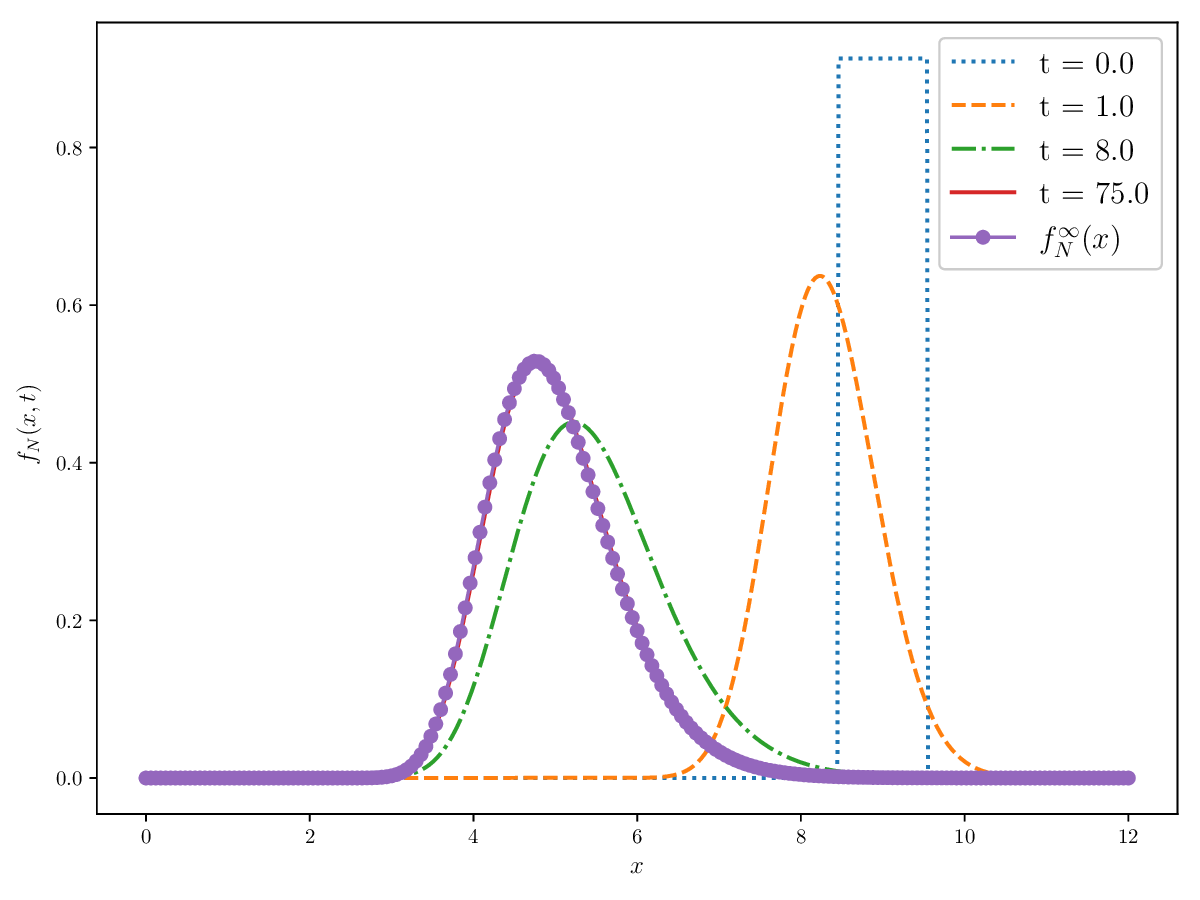}
        \includegraphics[width=0.48\textwidth]{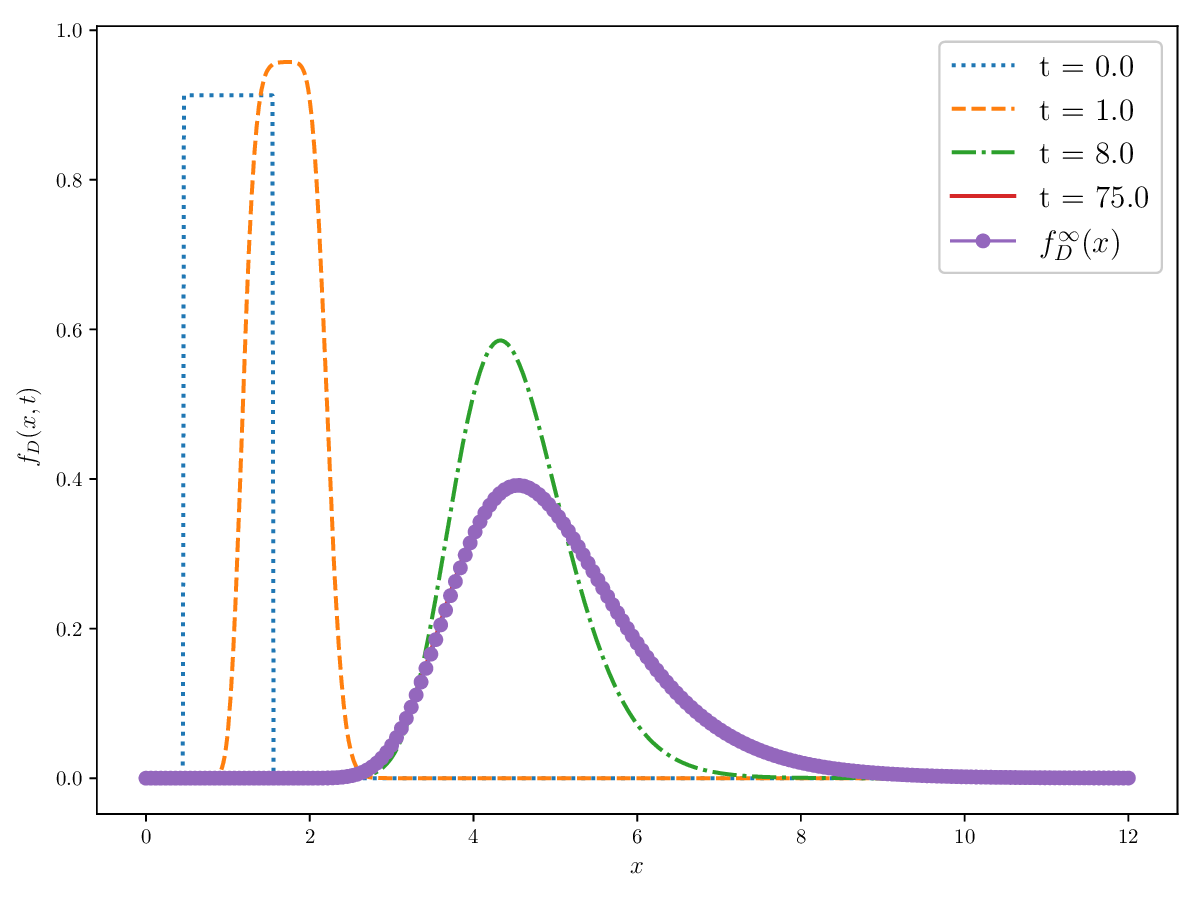}
        \includegraphics[width=0.48\textwidth]{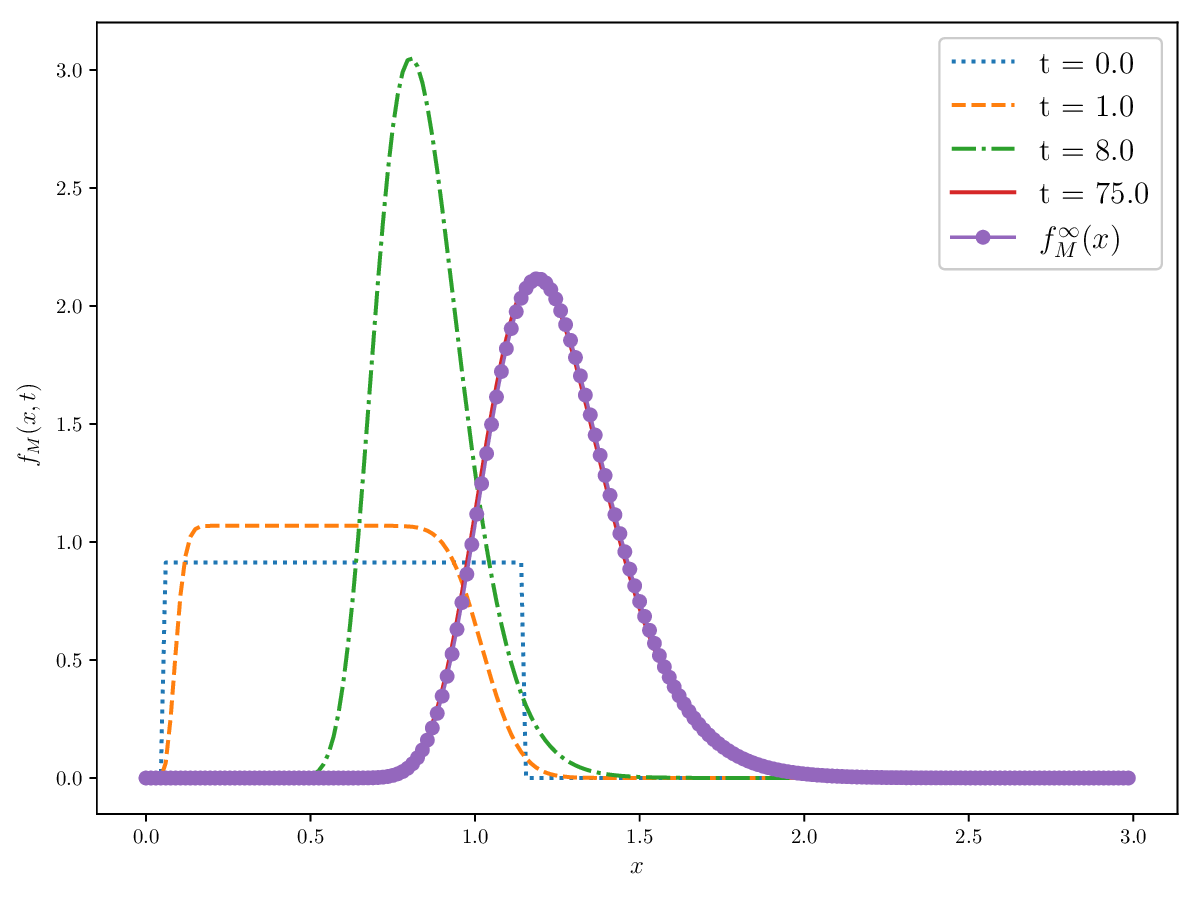}
        \includegraphics[width=0.48\textwidth]{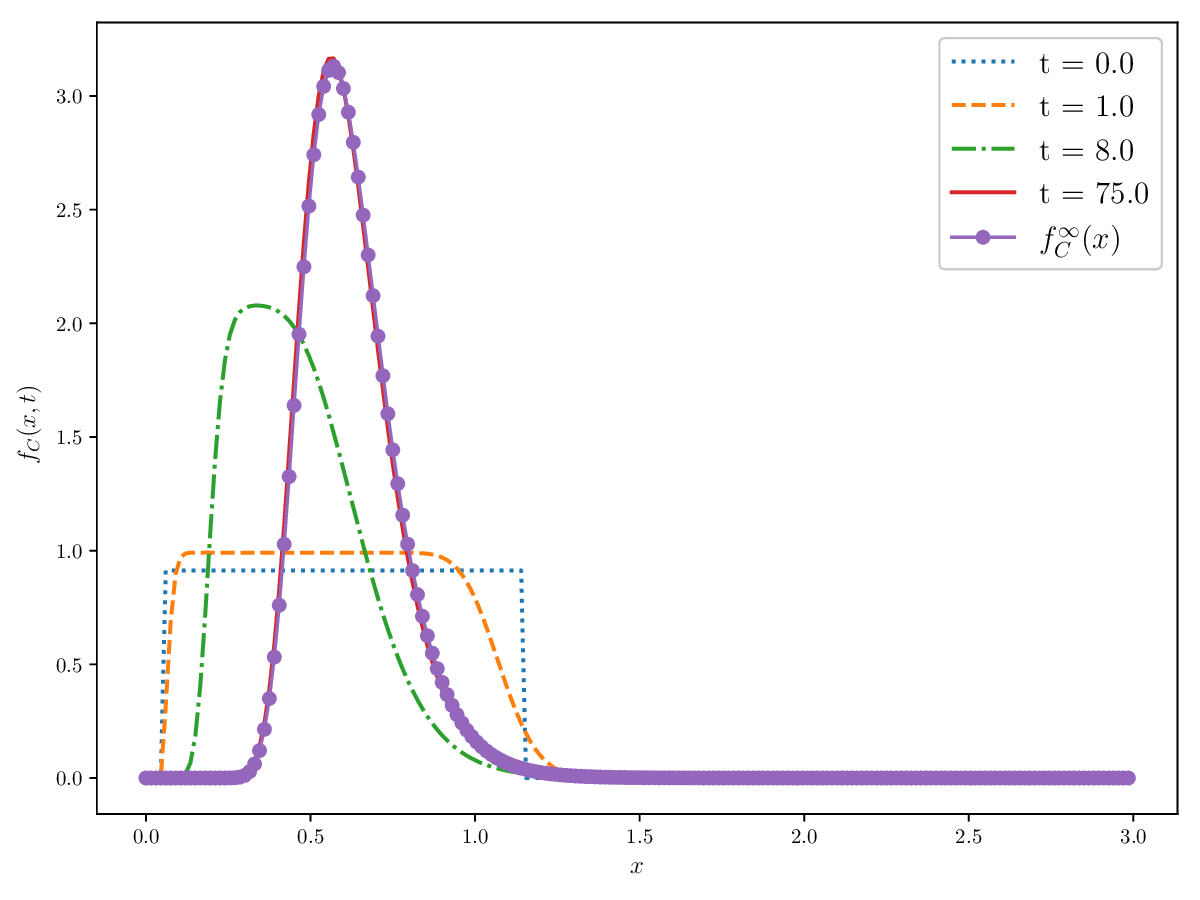}
    \caption{\textbf{Convergence to equilibrium of the mean-field model.} Plots of the distribution functions, $f_J(x,t)$, with $J \in \{N,D,M,C\}$, of the mean-field model~\eqref{eq:MFN}-\eqref{eq:MFoBCs} for $t \in \{0, 1, 8, 75\}$ (lines without markers) and the corresponding equilibrium distributions defined via~\eqref{eq:finftyJ} (lines with circle markers). Numerical simulations were carried out under initial conditions of components defined via~\eqref{eq:init_f} and the parameter values listed in Table~\ref{Table:params}.}
    \label{fig:sol_evol}
\end{figure}

To demonstrate numerically that, as per Theorem~\ref{th:convergencetoeq}, 
\[
\| f_J - f_J^\infty\|_{\dot{H}_{-p}} \to 0 \quad \text{as } t \to \infty, \qquad \frac{1}{2}<p<1, \qquad J \in \{N,D,M,C\}, 
\]
we exploit the fact that, as discussed in \cite{ABGT,MTZ}, 
\begin{equation}
\label{eq:equiv}
\| f - g\|_{\dot{H}_{-p}} = 
\dfrac{\sqrt{2}}{2p-1} \dfrac{\Gamma(\frac{3}{2}-p)}{2^{2p-1}\Gamma(p)}\mathcal E^p(f,g), 
\end{equation}
where
\begin{equation}
\begin{split}
\mathcal E^p(f,g) = &2\int_{\mathbb R_+^2}|x-y|^{2p-1}f(x)g(y)dx\,dy - \int_{\mathbb R_+^2}|x-y|^{2p-1}f(x)f(y)dx\,dy \; -  \\
& \int_{\mathbb R_+^2}|x-y|^{2p-1}g(x)g(y)dx\,dy
\end{split}
\label{def:mathcalE}
\end{equation}
is the so-called Energy Distance, see also \cite{SR}.  The plots in Figure~\ref{fig:error_grid} display the evolution of $\mathcal E^p(f_J,f_J^q)$ for different values of $p \in (\frac{1}{2},1)$, i.e. $p \in \{ \frac 5 8, \frac 3 4, \frac 7 8\}$, with $f_J(x,t)$ being the components of the numerical solution to the mean-field model~\eqref{eq:MFN}-\eqref{eq:MFoBCs} and $f_J^q(x,t)$ being computed by substituting into~\eqref{eq:fqND} and \eqref{eq:fqMC} the numerical values of $\omega_J(t)$, $J \in \{N,D,M,C\}$. These plots show that, consistently with Theorem~\ref{th:convergencetoeq}, $\mathcal E^p(f_J,f_J^q)$, and thus $\|f_J - f_J^q\|_{\dot{H}_{-p}}$, converges to zero for all $J \in \{N,D,M,C\}$ as $t \to \infty$, for all the values of $p \in (\frac{1}{2},1)$ considered. 

\begin{figure}
    \centering
        \includegraphics[width=0.48\textwidth]{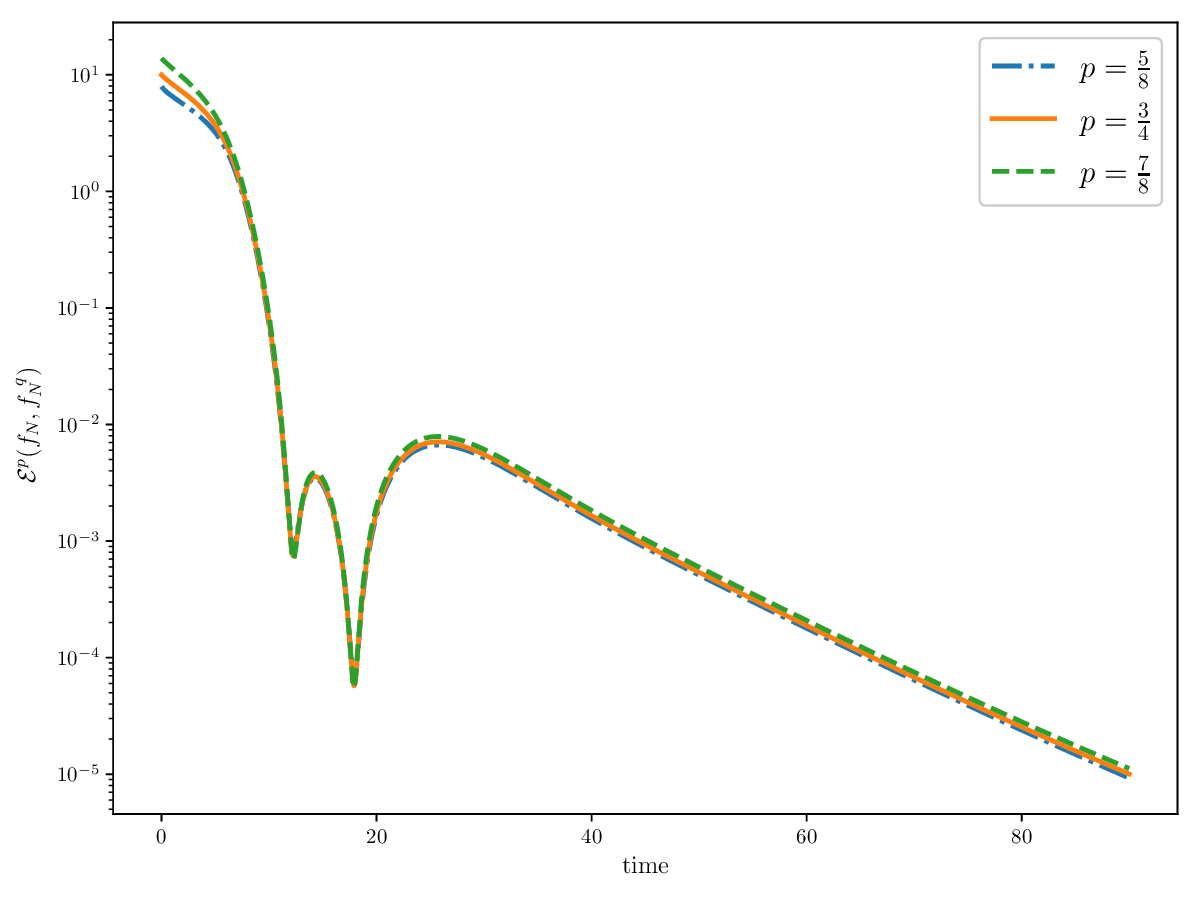}
        \includegraphics[width=0.48\textwidth]{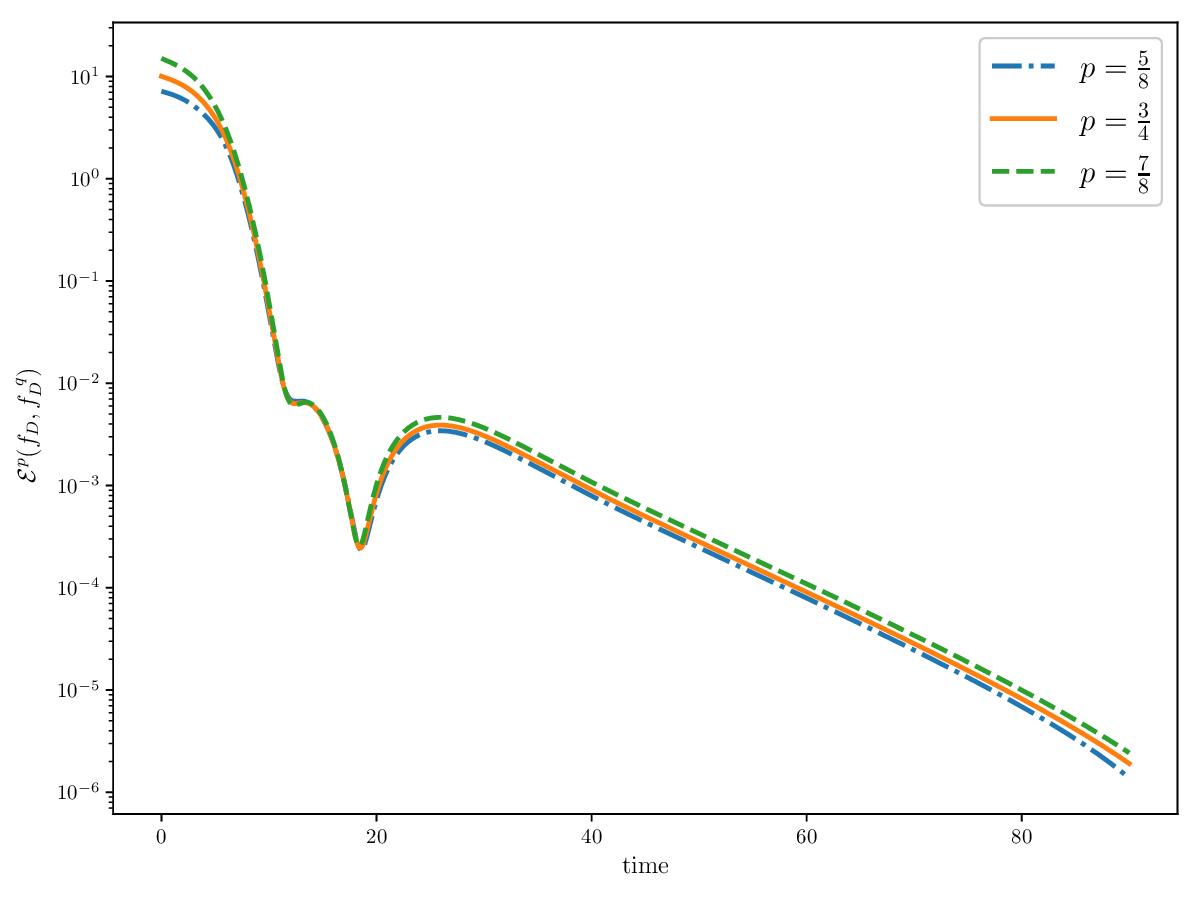}
        \includegraphics[width=0.48\textwidth]{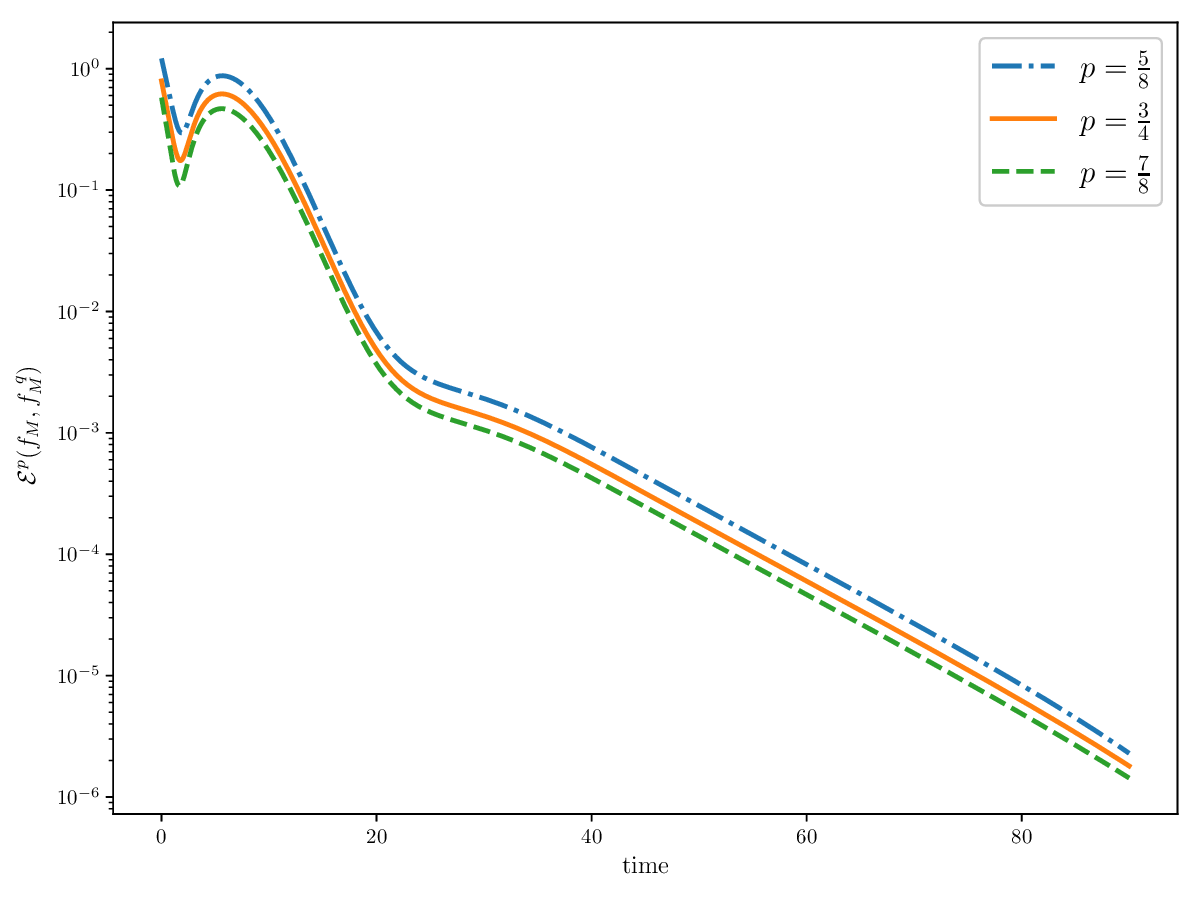}
        \includegraphics[width=0.48\textwidth]{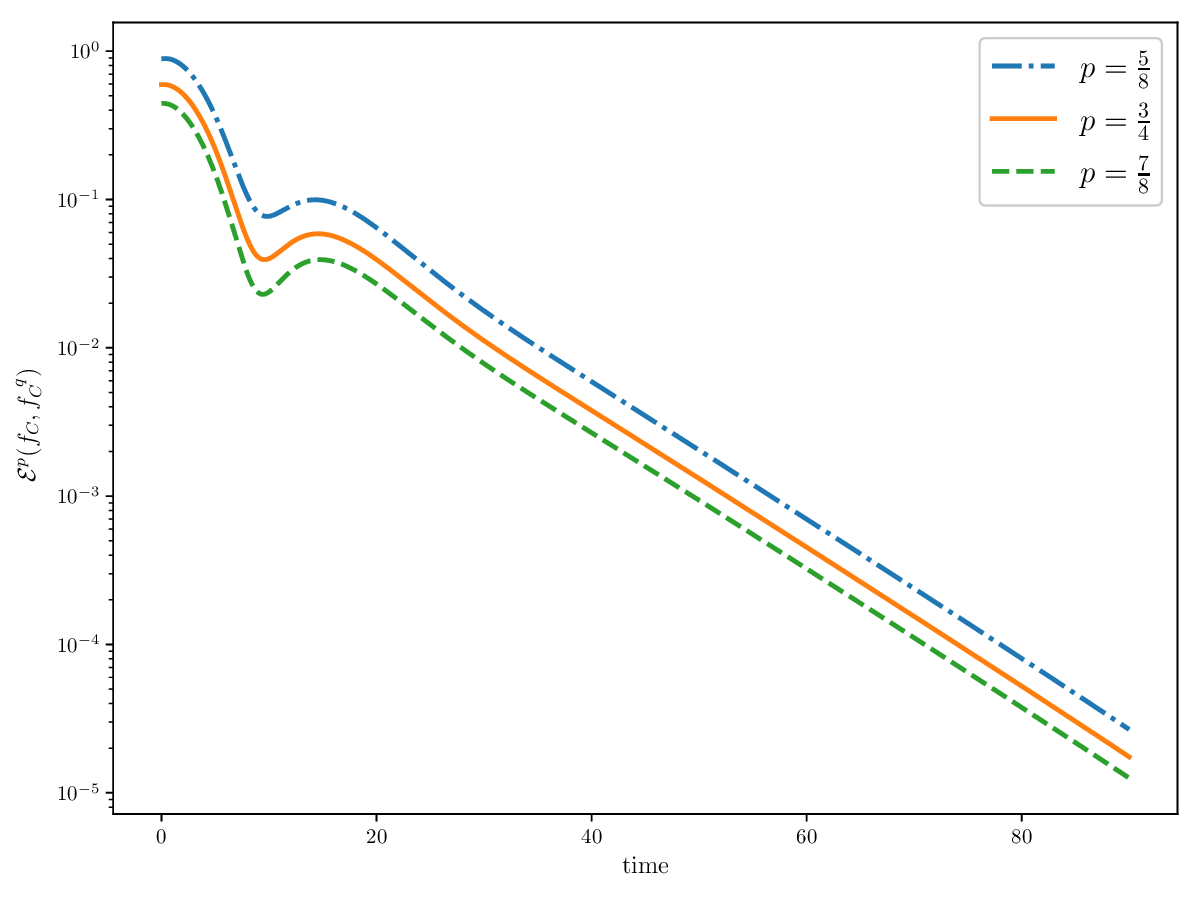}
    \caption{\textbf{Convergence to equilibrium of the mean-field model.} Plots of $\mathcal E^p(f_J,f_J^q)$ defined via~\eqref{def:mathcalE}, for $p \in (\frac{1}{2},1)$, i.e. $p \in \{ \frac 5 8, \frac 3 4, \frac 7 8\}$, with $J \in \{N,D,M,C\}$. Numerical simulations were carried out under initial conditions of components defined via~\eqref{eq:init_f} and the parameter values listed in Table~\ref{Table:params}.}
    \label{fig:error_grid}
\end{figure}

Note that the plots in Figure~\ref{fig:error_grid} indicate that $\mathcal E^p(f_J,f_J^q)$ does not decay monotonically to zero. Yet, the plots in Figure~\ref{fig:up} demonstrate that, consistently with the estimates derived in the proof of Theorem~\ref{th:convergencetoeq},
$$
\mathcal E^p(f_J,f_J^q) \leq y_J(t),  \quad \text{as } t \to \infty, \qquad J \in \{N,D,M,C\},
$$
where
\begin{equation}
y_J(t) = \phi_J(t) - 1, \qquad \phi_J(t) = \exp\left[-(3-2p)\int_0^t a_J(s)ds \right], 
\label{def:yJ}
\end{equation}
with $a_J$ being computed numerically through the formulas provided in the proof of Theorem~\ref{th:convergencetoeq}.
\begin{figure}
    \centering
        \includegraphics[width=0.48\textwidth]{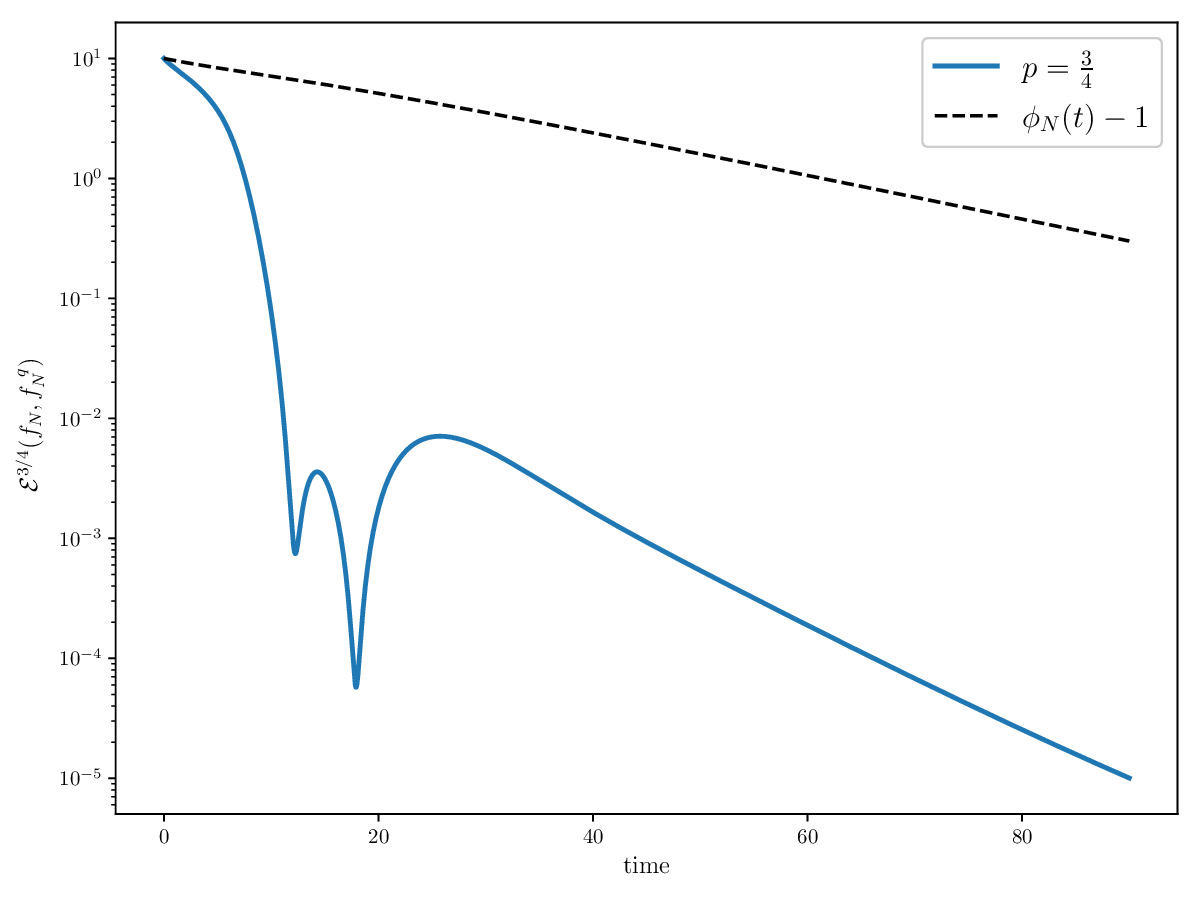}
        \includegraphics[width=0.48\textwidth]{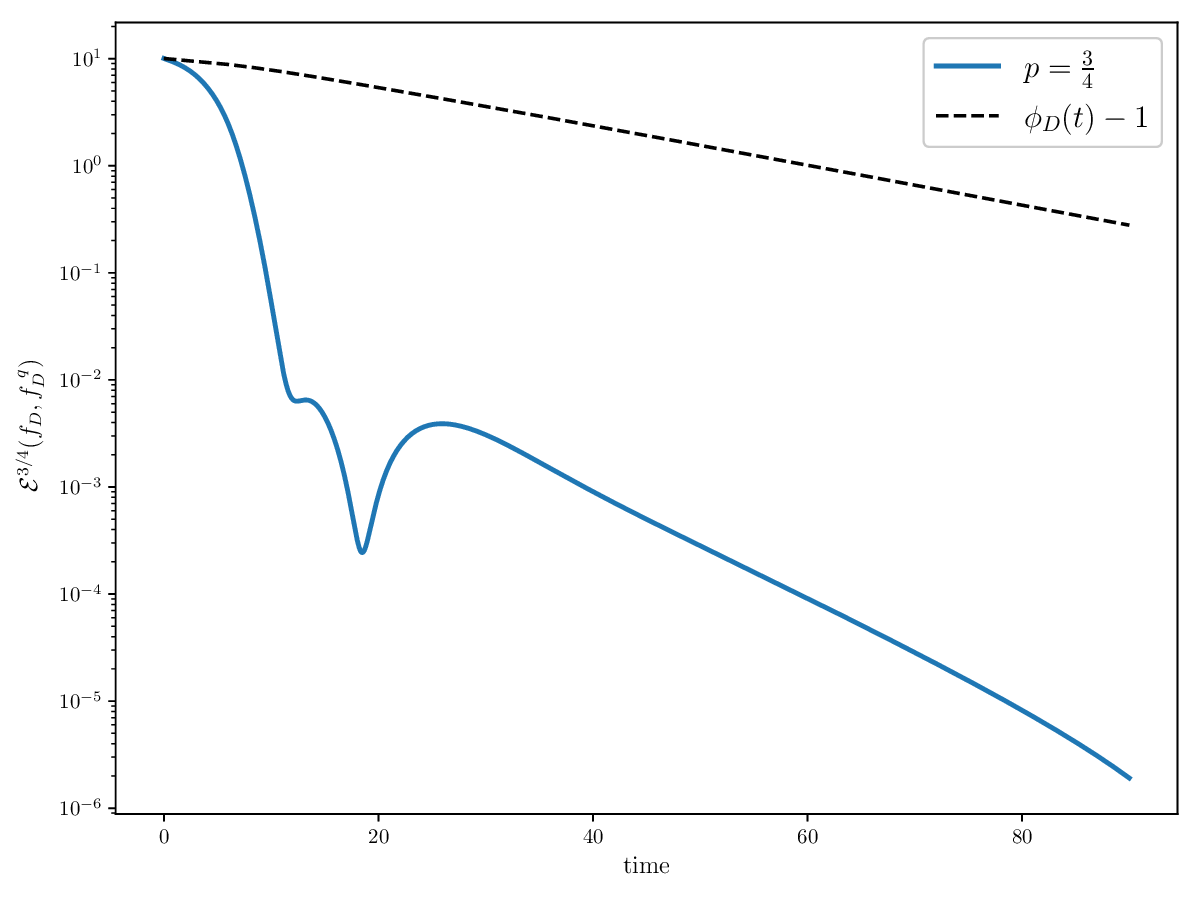}
        \includegraphics[width=0.48\textwidth]{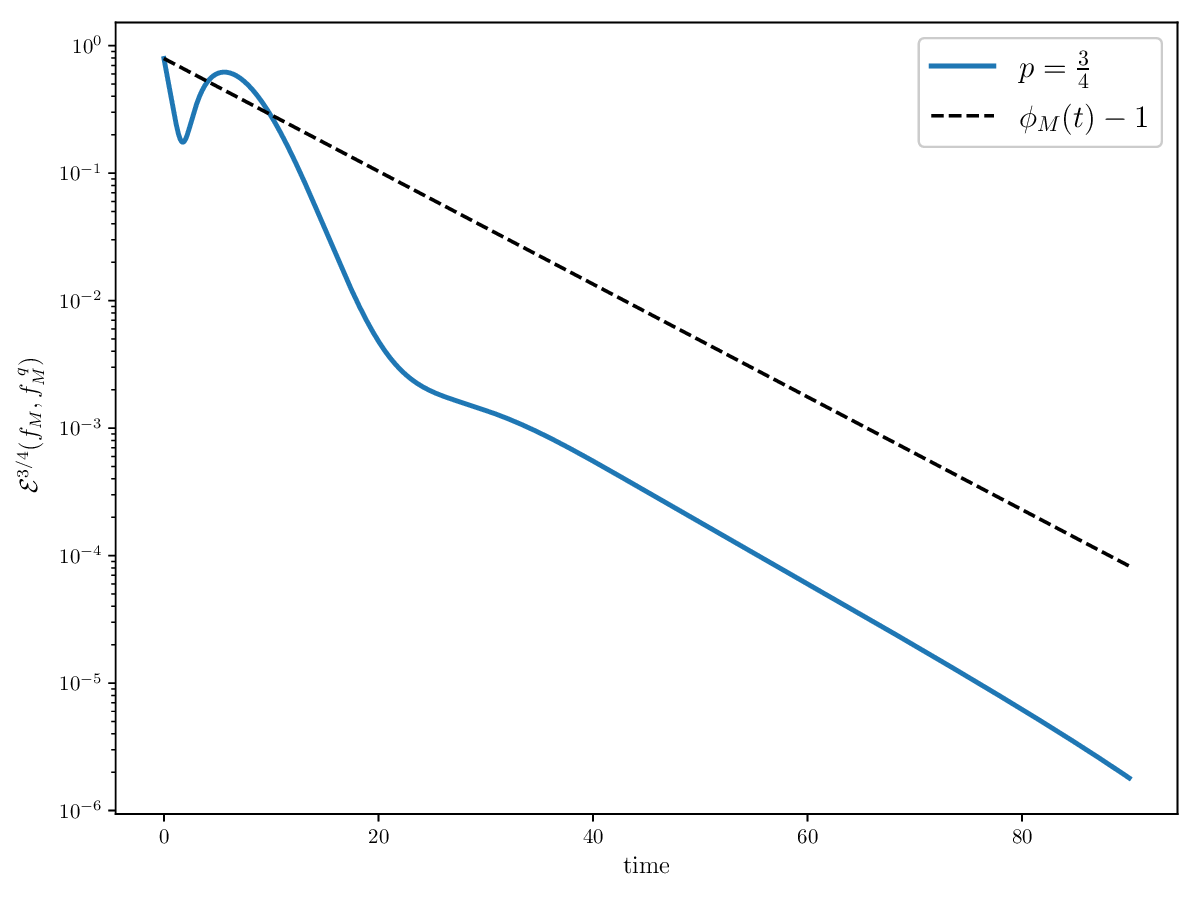}
        \includegraphics[width=0.48\textwidth]{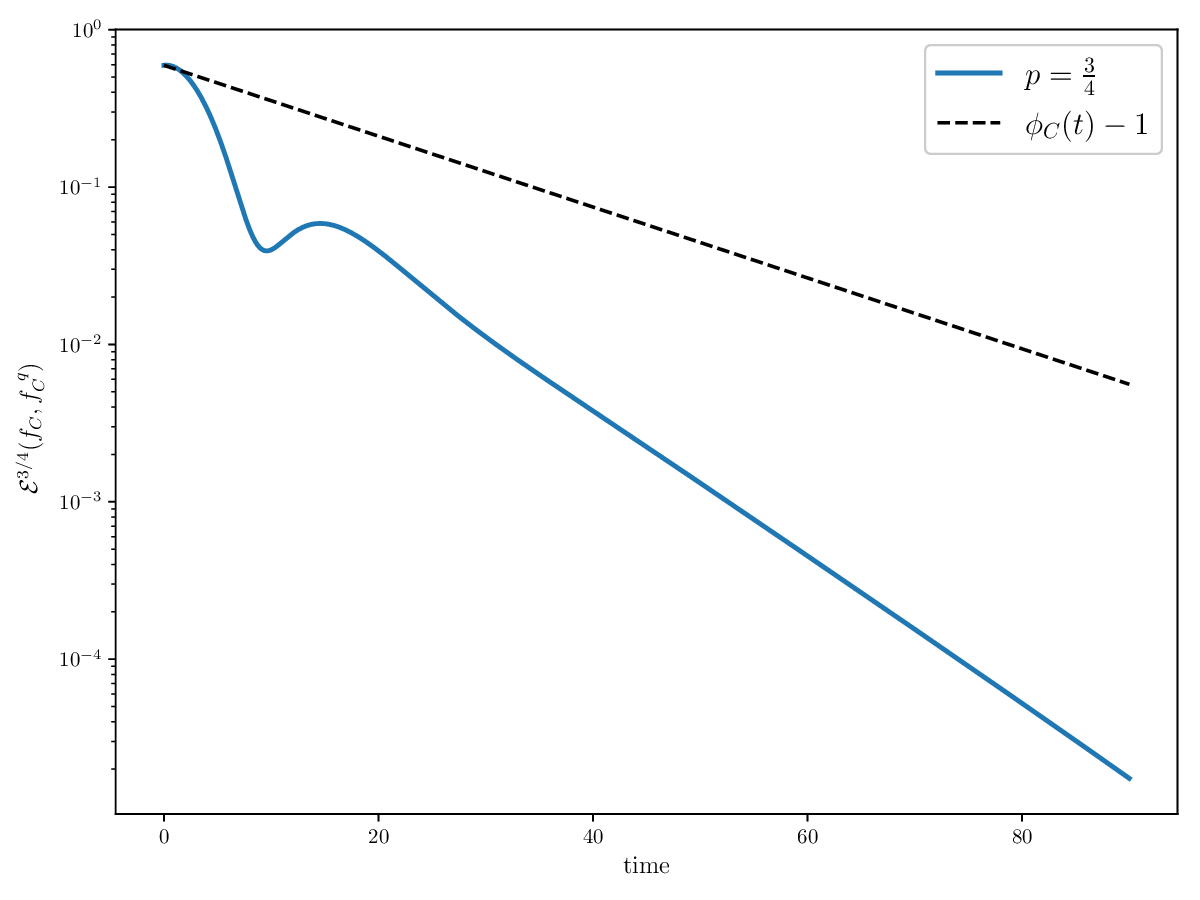}
    \caption{\textbf{Convergence to equilibrium of the mean-field model.} Plots of $\mathcal E^p(f_J,f_J^q)$ (solid lines) defined via~\eqref{def:mathcalE}, for $p = \frac{3}{4}$, and $y_J(t) = \phi_J(t) - 1$ (dashed lines) defined via~\eqref{def:yJ}, with $J \in \{N,D,M,C\}$. Numerical simulations were carried out under initial conditions of components defined via~\eqref{eq:init_f} and the parameter values listed in Table~\ref{Table:params}.}
    \label{fig:up}
\end{figure}

\newpage
\section{Conclusions and perspectives}
\label{sect:5}
In this paper, we have presented a novel kinetic model for MDs. The model comprises a system of integro-differential equations for the statistical distributions, over a large patient cohort, of the densities of muscle fibers and immune cells implicated in muscle inflammation, degeneration, and regeneration, which underpin disease development. Through an appropriate scaling procedure, we have formally derived the corresponding mean-field and macroscopic limits, thereby establishing a hierarchy of models that consistently describe the dynamics of muscle and immune cells playing a key role in the development of MDs across scales. 

Exploiting its analytical tractability, we have studied the long-time behaviour of the mean-field model. The analytical results obtained, which have been illustrated through numerical simulations, provide a full characterisation of the equilibrium cell distribution functions and establish convergence to equilibrium. These results offer a detailed mathematical depiction of how different parameters underlying cell dynamics shape the statistical distributions of muscle and immune cells over the patient cohort. As such, they also exemplify how, although the focus of this paper has been on mathematical aspects, the proposed modeling approach has the potential to serve as a quantitative tool to gain new insights into the role that different cellular processes underlying the interaction dynamics between muscle and immune cells may play in the progression of MDs.  

The present work provides a basis for future extensions, including modeling the action of therapeutic interventions. In this regard, an avenue for future research could be to investigate the effect of therapeutic approaches that keep under control the growth of the density of cytotoxic T lymphocytes induced by macrophages, in the vein of the recent experimental results on DMD presented in~\cite{WHKG}. This could be done, for instance, by modifying the definition~\eqref{eq:c} of $x_C^{\prime\prime}$ as follows 
\[
x_C^{\prime\prime} = x_C + \gamma_C x_M - \nu x_C,
\]
where the parameter $\nu \geq 0$ would be linked to the efficacy of therapy. In the mean-field limit, this would then result in the Fokker-Planck equation~\eqref{eq:MFo} for $f_C$ being replaced by the following equation for $\bar f_C$, i.e. the controlled density of  cytotoxic T lymphocytes,
$$
\dfrac{\partial}{\partial t} \bar f_C(x,t) = \dfrac{\sigma_C^2}{2} \dfrac{\partial^2}{\partial x^2} (x^2 \bar f_C(x,t) + \dfrac{\partial}{\partial x} \left\{\left[\left(\beta_C + \nu \right)x - \gamma_C \bar m_M\right] \bar f_C(x,t) \right\}.
$$
The mean density of cytotoxic T lymphocytes for $\nu>0$, i.e. the controlled mean $\bar m_C$, would then satisfy the following differential equation
\[
\dfrac{d}{dt}\bar m_C = -(\beta_C + \nu)\bar m_C + \gamma_C \bar m_M,
\]
and the equilibrium value of the mean density of normal cells, i.e. the controlled mean $\bar m^{\infty}_N$, and the corresponding variance, $\bar V_N^\infty$, would thus be
\[
\bar m_N^\infty = \dfrac{\beta_D(\beta_C + \nu) m^0}{\beta_D(\beta_C + \nu) + \beta_N \gamma_C}, \qquad \bar V_N^\infty = \dfrac{\sigma^2_N}{2\beta_N - \sigma_N^2} (\bar m_N^\infty)^2.
\]
Recalling the expression of the equilibrium value of the mean density of normal cells for $\nu=0$, i.e. the mean $m^{\infty}_N$ defined via~\eqref{eq:MNDMCinfty}, and the corresponding variance, $V^{\infty}_N$ defined via~\eqref{eq:VNDMCinfty}, one sees that
\[
\bar m_N^\infty  > m_N^\infty, \qquad \bar V_N^\infty>V_N^\infty.
\]
Hence, compared to the case without treatment, the therapy would have the beneficial effect of allowing for a larger number of normal cells to be preserved, but it would also have the detrimental effect of increasing the corresponding variance. This would imply a higher level of intra-patient heterogeneity, which might undermine therapeutic efficacy by making the design of targeted therapeutic strategies more challenging. This suggests the need for combination therapy to thwart such a detrimental effect, which points the way toward novel compelling research directions for the mathematical modeling of MDs. 

\section*{Acknowledgements}
The research underlying this paper has been undertaken within the activities of the GNFM group of INdAM (National Institute of High Mathematics). All the authors acknowledge partial support from the PRIN2022PNRR project No.P2022Z7ZAJ, European Union - NextGenerationEU. M.Z. acknowledges partial support by ICSC - Centro Nazionale di
Ricerca in High Performance Computing, Big Data and Quantum Computing, funded by European Union - NextGenerationEU.

\end{document}